\documentclass{amsart}

\usepackage[english]{babel}

\usepackage{amsmath, amssymb, amsthm, enumitem, comment}
\usepackage{graphicx}
\usepackage{tikz-cd}
\usepackage[all]{xy}
\usepackage[colorlinks=true, allcolors=blue]{hyperref}

\newtheorem*{claim-star}{Claim}
\newtheorem{theorem}{Theorem}[section] 
\newtheorem{lemma}[theorem]{Lemma}

\newtheorem{prop-def}[theorem]{Proposition-Definition}

\newtheorem{fact-eh}[theorem]{Fact(?)}

\newtheorem{proposition}[theorem]{Proposition}
\newtheorem{proposition-eh}[theorem]{Proposition(?)}
\newtheorem*{theorem-star}{Theorem}
\newtheorem*{conjecture-star}{Conjecture}
\newtheorem*{question-star}{Question}
\newtheorem*{lemma-star}{Lemma}

\theoremstyle{definition}
\newtheorem{definition}[theorem]{Definition}
\newtheorem{construction}[theorem]{Construction}

\newtheorem{problem}[theorem]{Problem}

\newtheorem{question}[theorem]{Question}

\numberwithin{equation}{section}
\newtheorem{remark}[theorem]{Remark}
\theoremstyle{remark}

\newcommand{\Ff}{\mathbb{F}}

\newcommand{\Zz}{\mathbb{Z}}

\newcommand{\Ee}{\mathbb{E}}
\newcommand{\Ss}{\mathbb{S}}

\newcommand{\calC}{\mathcal{C}}

\newcommand{\Rom}[1]{\uppercase\expandafter{\romannumeral #1\relax}}

\newcommand{\calA}{\mathcal{A}}

\newcommand{\suspen}{\Sigma^{\infty}}

\newcommand{\CAlg}{\operatorname{CAlg}}
\newcommand{\Mod}{\operatorname{Mod}}

\DeclareMathOperator{\Fun}{Fun}

\DeclareMathOperator{\colim}{colim}
\DeclareMathOperator{\Sp}{Sp}
\DeclareMathOperator{\MU}{MU}
\DeclareMathOperator{\BP}{BP}
\DeclareMathOperator{\HH}{HH}
\DeclareMathOperator{\K}{K}
\DeclareMathOperator{\THH}{THH}

\DeclareMathOperator{\TC}{TC}

\title{A note on higher topological Hochschild homology}
\author{Rixin Fang}
\email{rxfang21@m.fudan.edu.cn}
\address{Shanghai Center for Mathematical Sciences, Fudan University, 2005 Songhu Road 200438, Shanghai, China}

\begin{document}
\begin{abstract}
Chromatic redshift phenomena suggest that algebraic K-theory increases the height of a commutative ring spectrum by one. In many cases, the chromatic redshift is already detected by negative topological cyclic homology. This paper explores higher chromatic redshift via homotopy fixed points spectrum of higher topological Hochschild homology. Specifically, starting from a commutative ring spectrum that detects $v_n$-elements, the homotopy fixed points spectrum of higher topological Hochschild homology of it detects $v_{n+k}$-elements, with $k$ greater than one.
\end{abstract}
\maketitle

\section{Introduction}
\subsection{Motivation}
Chromatic redshift states that algebraic K-theory increases the height by one. The qualitative form of chromatic redshift is resolved for $\Ee_{\infty}$-rings. 
For a commutative ring spectrum $X$, J.Hahn proved that if $K(n)_*X =0$ then $K(n+1)_* X=0$.

Hence, one can make the following definition.

\begin{definition}
    A commutative ring spectrum $X$ is height $n$ if $K(n)_* X \ne 0$ and $K(n+1)_* X=0$. We write the height of $X$ by $\mathrm{ht}(X)$.
\end{definition}

From this definition, it is natural to ask whether there exists higher redshift phenomena. Meaning that, is there a functor 
\begin{equation*}
  F^n:  \CAlg \to \CAlg
\end{equation*}
such that $\mathrm{ht}(F^n(X))=\mathrm{ht}(X)+n$?

It is clear that $\K^{(n)}$ has this property.

The redshift phenomenon is observed in many known cases through negative topological cyclic homology.
For instance, we have the following theorem.

\begin{theorem}[{\cite{HW22}}]
Let $\BP\langle n\rangle$ be the $\Ee_3$-algebra forms constructed in \cite{HW22}. Then \begin{equation*}
    K(n+1)\otimes \THH(\BP\langle n\rangle)^{h S^1} \ne 0.
\end{equation*}
\end{theorem}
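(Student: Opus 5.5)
The idea is to exhibit a nonzero class $v_{n+1}$ in the $E_2$-page of a spectral sequence computing $K(n+1)_*\THH(\BP\langle n\rangle)^{hS^1}$, and to show it survives. A cleaner route is to argue by contradiction: show that $K(n+1)_*\THH(\BP\langle n\rangle)^{hS^1}=0$ would force an impossible relation in the homotopy fixed point spectral sequence. I will use the mod $(p,v_1,\dots,v_n)$ reduction of Hahn--Wilson.

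\textbf{Step 1: reduce to a statement about $v_{n+1}$.} Since $\THH(\BP\langle n\rangle)^{hS^1}$ is a ring spectrum (here $\Ee_2$ suffices, since $\BP\langle n\rangle$ is $\Ee_3$), the ring $K(n+1)\otimes\THH(\BP\langle n\rangle)^{hS^1}$ vanishes iff $1=0$ in it. By the usual thick-subcategory argument this happens iff, after tensoring with a generalized Moore spectrum $\Ss/(p^{i_0},v_1^{i_1},\dots,v_n^{i_n})$, the $v_{n+1}$-self map acts nilpotently on the unit. So it suffices to show that the image of a $v_{n+1}$-element under the unit map to
\[
\THH(\BP\langle n\rangle)^{hS^1}\otimes \Ss/(p,v_1,\dots,v_n)
\]
is not nilpotent. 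If needed, I would pass to a suitable type-$(n+1)$ complex admitting a $v_{n+1}$ self-map.

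\textbf{Step 2: compute THH modulo $(p,\dots,v_n)$.} I would use the Hahn--Wilson computation: $\THH(\BP\langle n\rangle)\otimes_{\BP\langle n\rangle}\Ff_p$ has homotopy
\[
\Lambda(\lambda_1,\dots,\lambda_{n+1})\otimes \Ff_p[\mu_{n+1}],\qquad |\lambda_i|=2p^i-1,\quad |\mu_{n+1}|=2p^{n+1}.
\]
This follows from the B\"okstedt-type computation of $\THH(\BP\langle n\rangle;\Ff_p)$ via the dual Steenrod algebra and the fact that $\BP\langle n\rangle$ is an $\Ee_3$-$\MU$-algebra form.

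\textbf{Step 3: run the $S^1$ homotopy fixed point spectral sequence.} The $E_2$-page is $\Ff_p[t]\otimes\Lambda(\lambda_i)\otimes\Ff_p[\mu_{n+1}]$. The key claim is that the class $v_{n+1}$ maps to $t\mu_{n+1}$, up to a unit and higher filtration. I would detect this by comparing with the unit map $\MU^{hS^1}\to\THH(\MU)^{hS^1}$, or with the trivial-action case, and by using the formula $\sigma^2 v_{n+1}\doteq\mu_{n+1}$ together with the relation $v_{n+1}\mapsto t\,\sigma^2v_{n+1}$ for the circle action. Because $t\mu_{n+1}$ sits in a polynomial subalgebra, showing that it is a nonzero permanent cycle whose powers are never hit by differentials gives non-nilpotence. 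The differentials come in only a few forms, such as $d(\lambda)$ landing in $t$-multiples, and these leave $\Ff_p[t\mu_{n+1}]$ intact by degree and filtration reasons.

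\textbf{Main obstacle.} The hard step is Step 3: identifying $v_{n+1}$ with $t\mu_{n+1}$ and controlling differentials on its powers. This is only valid modulo $(p,\dots,v_n)$, and it requires the $\Ee_3$-structure to make the quotient a well-behaved module and to obtain multiplicativity of the spectral sequence. To handle differentials I would first try to use the map to the Tate construction $\THH^{tS^1}$ and the Segal conjecture type statement (Hahn--Wilson's ``canonical vs Frobenius'' comparison), which says the Frobenius is an equivalence in large degrees. Combined with the known degrees, this forces $\mu_{n+1}$-periodicity in the Tate spectral sequence, and hence non-nilpotence of $t\mu_{n+1}$.
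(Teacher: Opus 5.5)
\textbf{Gap in Step 3.} Your Steps 1 and 2 are fine in substance. One correction: $\Ss/(p,v_1,\ldots,v_n)$ does not exist as a spectrum for general $n$. You have to form the quotient as an $\MU$-module using the $\Ee_3$-$\MU$-algebra structure, or use a generalized Moore spectrum instead. The genuine gap is Step 3, which carries all the content and is only asserted.

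The absolute homotopy fixed point spectral sequence, with $E_2=\Ff_p[t]\otimes\Lambda(\lambda_1,\ldots,\lambda_{n+1})\otimes\Ff_p[\mu_{n+1}]$, contains odd classes and has nonzero long differentials. Already for $\ell$, Ausoni--Rognes find $d^{2p}(t^{1-p})\doteq t\lambda_1$, so $t$ itself is not a permanent cycle.

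Degree and filtration do not protect $\Ff_p[t\mu_{n+1}]$. Take the class $t^{k-m}\lambda_i\mu_{n+1}^{j}$ with $m=p^{n+1}(k-j)-p^i+1$. It has total degree exactly one more than $(t\mu_{n+1})^k$ and lies $2m$ filtrations below it. So once $k\geq m$, it is a legitimate candidate source of a $d^{2m}$ hitting $(t\mu_{n+1})^k$. Excluding such differentials is the hard work Ausoni--Rognes do by hand for $\ell$, and you give no mechanism for general $n$.

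Your appeal to the Segal conjecture does not supply one. That statement concerns the Frobenius $\THH\to\THH^{tC_p}$, not the canonical map along which $v_{n+1}$ travels. Periodicity in the Tate spectral sequence also says nothing about whether the particular class $t\mu_{n+1}$ is a boundary there, and the Tate spectral sequence has even more potential sources, from negative powers of $t$. Finally, the symbol $\sigma^2v_{n+1}$ and the rule $v_{n+1}\mapsto t\,\sigma^2v_{n+1}$ only make sense over a base in which $v_{n+1}$ lives and becomes zero, i.e.\ after mapping to $\THH(\BP\langle n\rangle/\MU)$.

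\textbf{The missing idea.} That map is what you are missing, and once you use it the absolute computation is unnecessary. The paper argues as follows.
\begin{enumerate}
\item The $S^1$-equivariant ring map $\THH(\BP\langle n\rangle)\to\THH(\BP\langle n\rangle/\MU)$ induces a ring map on homotopy fixed points.
\item After $p$-localization, $\MU_*\to\BP\langle n\rangle_*$ is a quotient by a regular sequence. Hence $\THH(\BP\langle n\rangle/\MU)$ is concentrated in even degrees, built from classes $\sigma^2x$ of degree $|x|+2$.
\item So its homotopy fixed point spectral sequence has no differentials, and $v_{n+1}$ is detected by $t\,\sigma^2v_{n+1}$. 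Hahn--Wilson use this to prove $K(n+1)\otimes\THH(\BP\langle n\rangle/\MU)^{hS^1}\neq 0$.
\item If $K(n+1)\otimes\THH(\BP\langle n\rangle)^{hS^1}$ vanished, then $1=0$ in it, hence $1=0$ in the target of the induced ring map, a contradiction.
\end{enumerate}
In short, you should check non-nilpotence where the spectral sequence is even and pull it back along a ring map, rather than fight for it in the absolute spectral sequence.
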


\begin{remark}
In \cite{HW22}, the authors in fact proved 
\begin{equation*}
    K(n+1)\otimes \THH(\BP\langle n\rangle/\MU)^{h S^1} \ne 0.
\end{equation*}
Because there is a ring map
\begin{equation*}
    \THH(\BP\langle n\rangle)^{h S^1}\to \THH(\BP\langle n\rangle/\MU)^{h S^1},
\end{equation*}
hence this implies the above stated theorem.
For $n=1$, there is a weaker form proved by Ausoni--Rognes \cite{AR02}.
They proved the map
\begin{equation*}
    V(1)_* \to V(1)_* \THH(\ell)^{h S^1}
\end{equation*}
sends $v_2$ to a non-zero element.

It seems that it is possible to using their argument to prove that the map
\begin{equation*}
    k(2)_* \to k(2)_* \THH(\ell)^{h S^1}
\end{equation*}
sends $v_2$ to a non-zero element.
\end{remark}
There is a detection theorem for height one Lubin--Tate theory (connective $p$-complete complex K-theory).
\begin{theorem}[{\cite{Ausoniku}}]
The map \begin{equation*}
    V(1)_* \to V(1)_* \THH(ku_p)^{h S^1}
\end{equation*}
sends $v_2$ to a non-zero element.
\end{theorem}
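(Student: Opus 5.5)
Since $V(1)=\Ss/(p,v_1)$ exists as a ring spectrum only for $p\ge 5$, I will assume $p\ge 5$ throughout. The plan is to reduce the statement to the analogous result for $\ell$ (the Ausoni--Rognes theorem in the Remark above) using the Galois-type extension $\ell_p\to ku_p$ with group $\Delta=(\Zz/p)^\times$ of order $p-1$, prime to $p$.

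\textbf{Step 1: the $\ell$ case.} Recall $ku_p\simeq \bigvee_{i=0}^{p-2}\Sigma^{2i}\ell_p$ as $\ell_p$-modules, and the $\Delta$-action on $ku_p$ by Adams operations has $ku_p^{h\Delta}\simeq \ell_p$. Since $|\Delta|$ is invertible $p$-adically, homotopy fixed points for $\Delta$ commute with smashing with $V(1)$, with $\THH$, and with $(-)^{hS^1}$. Hence
\begin{equation*}
V(1)\otimes \THH(\ell_p)^{hS^1}\simeq \bigl(V(1)\otimes\THH(ku_p)^{hS^1}\bigr)^{h\Delta},
\end{equation*}
and this is a retract of $V(1)\otimes\THH(ku_p)^{hS^1}$. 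Here I use that $\THH(ku_p)^{h\Delta}\simeq\THH(\ell_p)$, which follows from étaleness of $\ell_p\to ku_p$ after $p$-completion, since tamely ramified/étale base change gives $\THH(ku_p)\simeq ku_p\otimes_{\ell_p}\THH(\ell_p)$ up to $p$-completion. This is the point to be careful about, because $\ell_p\to ku_p$ is only tamely ramified, not étale, on connective covers. If it fails, I will instead use that $\THH(\ell_p)\to\THH(ku_p)$ admits a $\Delta$-averaging retraction $\frac{1}{p-1}\sum_{g\in\Delta}g$ on the $p$-complete level, which only needs the map to land in the $\Delta$-invariants and to compose to the identity after $p$-completion.

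\textbf{Step 2: the unit compatibility.} The unit maps $V(1)_*\to V(1)_*\THH(\ell)^{hS^1}\to V(1)_*\THH(ku_p)^{hS^1}$ are ring maps, and the second map is split injective by Step 1. Since the Ausoni--Rognes theorem gives that $v_2$ maps nonzero into $V(1)_*\THH(\ell)^{hS^1}$ (note $\THH(\ell)$ and $\THH(\ell_p)$ agree after smashing with $V(1)$), its image in $V(1)_*\THH(ku_p)^{hS^1}$ is nonzero.

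\textbf{Main obstacle.} The key difficulty is justifying the injectivity in Step 1, namely that $\THH(\ell_p)\to\THH(ku_p)$ is split after $V(1)$-smashing. The cleanest route is the averaging argument: the composite $\THH(\ell_p)\to\THH(ku_p)\to\THH(ku_p)^{h\Delta}$ should be an equivalence. I would check this on $V(1)$-homotopy using Ausoni's computation $V(1)_*\THH(ku_p)\cong V(1)_*\THH(\ell_p)$-module with $\Delta$ acting through a character on the extra $u$-summands, whose invariants recover $V(1)_*\THH(\ell_p)$. Should this verification fail, I would fall back to Ausoni's direct homotopy fixed point spectral sequence. That approach also works for $p=3$ with $V(1)$ replaced by a suitable module. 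There, $v_2$ is represented by $t\mu_2$ up to unit, or the $b$-class analogue $t\,b^{p-1}$-type class. I would show it is a permanent cycle not hit by differentials, comparing with the $\ell$ spectral sequence via the map, where Ausoni--Rognes show its survival.
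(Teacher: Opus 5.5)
Your reduction is a sound strategy: exhibit $V(1)\otimes\THH(\ell_p)^{hS^1}$ as the $\Delta$-invariant summand of $V(1)\otimes\THH(ku_p)^{hS^1}$, then import the Ausoni--Rognes detection of $v_2$. (The paper gives no argument of its own here; it only cites Ausoni.) Step 2 is fine once Step 1 is known. But Step 1, that $\THH(\ell_p)\to\THH(ku_p)^{h\Delta}$ is a $p$-adic equivalence, carries all the content, and none of your justifications proves it.

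\textbf{Your first justification is false.} The map $\ell_p\to ku_p$ is not \'etale, and $\THH(ku_p)\not\simeq ku_p\otimes_{\ell_p}\THH(\ell_p)$. Take $x\in H_2(ku_p;\Ff_p)$, the Hurewicz image of $u$. The class $\sigma x$ is $dx\neq 0$ in B\"okstedt filtration $1$, and nothing can hit it, so it is nonzero in $H_3(\THH(ku_p);\Ff_p)$. On the other hand, $ku_p\otimes_{\ell_p}\THH(\ell_p)\simeq\bigoplus_{i=0}^{p-2}\Sigma^{2i}\THH(\ell_p)$ has vanishing $H_3$. Relatedly, ``$(-)^{h\Delta}$ commutes with $\THH$'' is not a general fact: levelwise, $(ku_p^{\otimes(n+1)})^{h\Delta}$ is strictly larger than $\ell_p^{\otimes(n+1)}$. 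So such a statement can hold only after realization, and only for a specific reason.

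\textbf{The averaging fallback is circular.} The operator $\frac{1}{p-1}\sum_{g\in\Delta}g$ projects $\THH(ku_p)$ onto $\THH(ku_p)^{h\Delta}$. It gives a retraction of $\THH(\ell_p)\to\THH(ku_p)$ exactly when the lemma you are trying to prove already holds.

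\textbf{The third route is the right idea but is not carried out.} Reading off invariants from Ausoni's computation would work, but you only assert the outcome. Your picture of the extra classes as ``$u$-summands'' also leaves out exactly the classes that need checking: those not of the form $u^i$ times a class from $\ell_p$, which come from Hochschild homology of a truncated polynomial algebra. Falling back on Ausoni's spectral sequence computation simply amounts to citing the theorem.

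\textbf{The missing ingredient is a tame-ramification weight count.}
\begin{itemize}
\item $\Delta$-equivariantly, $H_*(ku_p;\Ff_p)\cong H_*(\ell_p;\Ff_p)\otimes P_{p-1}(x)$, with trivial action on the first factor and weight $1$ on $x$. So the B\"okstedt $E^2$-term is $\HH_*(H_*(\ell_p;\Ff_p))\otimes\HH_*(P_{p-1}(x))$.
\item Use the $2$-periodic resolution for $\Ff_p[x]/(x^{p-1})$. The generator attached to the relation has weight $p-1\equiv 0$.
\item This gives $\HH_0=P_{p-1}(x)$, in which only the unit has weight $0$.
\item $\HH_{2k-1}$ has basis $x^je_{2k-1}$ with $0\le j\le p-3$, of weight $\equiv j+1 \pmod{p-1}$.
\item $\HH_{2k}$ has basis $x^je_{2k}$ with $1\le j\le p-2$, of weight $\equiv j \pmod{p-1}$.
\end{itemize}
The derivative $x^{p-2}$ of the relation is what removes every weight-$0$ candidate, so only the unit of $\HH_*(P_{p-1}(x))$ is $\Delta$-invariant.

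Since $p\nmid|\Delta|$, taking invariants is exact. Hence the invariant part of the ($\Delta$-equivariant) B\"okstedt spectral sequence for $ku_p$ is the spectral sequence for $\ell_p$. This gives
\[
H_*(\THH(\ell_p);\Ff_p)\cong H_*(\THH(ku_p);\Ff_p)^{\Delta}\cong H_*(\THH(ku_p)^{h\Delta};\Ff_p),
\]
and a mod $p$ homology isomorphism of connective spectra is a $p$-adic equivalence. With this lemma in hand, your retract argument in Steps 1--2 goes through as written.
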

Recall that there is a trace map
\begin{equation*}
    \K^{(n)} \to \THH^{(n)}(A)^{ h T^n}
\end{equation*}
which is a ring map. Here $\THH^{(n)}(A)=\THH(\THH^{(n-1)}(A))$, and $\THH^{(1)}(A)=\THH(A)$. The target $\THH^{(n)}(A)$ can be identified with the Loday construction $T^n\otimes A= L_{T^n} A$. This is referred to higher topological Hochcshild homology. In the height $-1$ case, the computation of higher topological  Hochschild homology was done by Veen. 
For the height $0$ case, see \cite{DLR18}.
Veen proved the following theorem.
\begin{theorem}[{\cite{Veen18}}]
Let $p\geq 5$ and $1\leq n\leq p$ or $p=3$ and $1\leq n\leq 2$.
The unit map \begin{equation*}
    k(n-1)_*\to k(n-1)_*(L_{T^n} \Ff_p)^{h T^n}
\end{equation*}
sends $v_{n-1}$ to a nonzero element in the right hand side.
\end{theorem}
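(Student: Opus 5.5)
The plan is to run the $T^n$-homotopy fixed point spectral sequence for $k(n-1)\otimes L_{T^n}\Ff_p$. One first computes the input $k(n-1)_*L_{T^n}\Ff_p$, then finds a class in positive filtration that detects $v_{n-1}$ and survives.

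\textbf{Step 1: the homotopy of $L_{T^n}\Ff_p$.} Since $L_{T^n}\Ff_p=\THH(L_{T^{n-1}}\Ff_p)$, compute $\pi_*L_{T^n}\Ff_p$ inductively, starting from Bökstedt's $\pi_*\THH(\Ff_p)=\Ff_p[\mu_0]$ with $|\mu_0|=2$. At each stage use the Bökstedt spectral sequence
\[
\HH_*^{\Ff_p}\bigl(\pi_*L_{T^{k-1}}\Ff_p\bigr)\Rightarrow \pi_*L_{T^k}\Ff_p .
\]
Hochschild homology of a polynomial generator gives an exterior class. Hochschild homology of an exterior generator gives a divided power algebra, which over $\Ff_p$ splits as a tensor product of truncated polynomial algebras $P_p$. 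Hochschild homology of a truncated $P_p$ produces exterior and divided power classes again, with degrees growing geometrically.

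One then argues that the spectral sequences collapse and have no multiplicative extensions. The tools are the $\Ff_p$-algebra structure of $L_{T^k}\Ff_p$, the splitting $L_{T^k}\simeq$ iterated Loday constructions over wedges of spheres via the stable splitting of $T^k$, and degree counting. The hypothesis $n\le p$ (respectively $n\le 2$ for $p=3$) should enter exactly here. In that range the first possible differential or extension involving a $p$-th power lands outside the bidegrees where classes live. Since $k(n-1)$ is an $\Ff_p$-module up to $v_{n-1}$-action, $k(n-1)\otimes L_{T^n}\Ff_p$ then has homotopy $\Ff_p[v_{n-1}]\otimes\pi_*L_{T^n}\Ff_p$. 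More precisely, one computes $k(n-1)\otimes_{\Ff_p}$ the relevant module via $k(n-1)\otimes\Ff_p$, noting $\Ff_p\otimes\Ff_p$ is the dual Steenrod algebra.

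\textbf{Step 2: locate $v_{n-1}$.} In the $E_2$-page $\Ff_p[t_1,\dots,t_n]\otimes k(n-1)_*L_{T^n}\Ff_p$, the unit map sends $v_{n-1}$ to filtration $0$. We want to show it is not killed, and is instead detected by a product $t_{i_1}\cdots t_{i_j}\cdot x$ of positive filtration. The model is the case $n=1$, where $p$ is detected by $t\mu_0$ in $\TC^-(\Ff_p)_*=\Zz_p[t,\mu_0]/(t\mu_0-p)$.

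The inductive mechanism is the "$\sigma$-operator" relation. Up to a unit, $d_2(x)=t_k\,\sigma_k x$, where $\sigma_k$ is the suspension along the $k$-th circle. The homotopy fixed point spectral sequence for the $k$-th circle therefore reproduces the relations among the generators of Step 1. The plan is to show that $v_{n-1}$ corresponds, under the iterated relations, to a class of the form $t_1 t_2\cdots t_n\cdot\mu_{n-1}^{(\cdot)}$ built from the top generators. The relevant relation is the $T^n$-analogue of Ausoni–Rognes' detection of $v_2$ by $t\mu_2$ in $V(1)_*\THH(\ell)^{hS^1}$. To do this, compare with the $n$ iterated circle actions one at a time, restricting along the coordinate inclusions $T^{k}\subset T^n$ and using naturality of the unit map.

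\textbf{Step 3: survival, the main obstacle.} We must show that the detecting class is a permanent cycle, since it is the image of $v_{n-1}$, and is not a boundary. The plan is to enumerate the possible sources of differentials hitting it in total degree $2p^{n-1}-1$, using the explicit $E_2$-page from Step 1 and the multiplicative structure with $t_i$-linearity. In the range $n\le p$, the degrees of the generators $\lambda$, $\mu$ and their divided powers should leave no room for a differential targeting the class. This step is where the hypothesis $n\le p$ is really needed.

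If a direct degree argument fails, I would try a fallback. I would map $L_{T^n}\Ff_p^{hT^n}$ to $(L_{T^n}\Ff_p)^{hS^1}$ along a diagonal or coordinate circle, and detect $v_{n-1}$ there. There the spectral sequence has one polynomial variable and is much easier to control.
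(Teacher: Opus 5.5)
\textbf{There is a genuine gap.} Your outline has the right shape (a $T^n$ homotopy fixed point spectral sequence with $d_2=\sum_i t_i\sigma_i$), but the step that proves the theorem is missing, and the guess in its place is wrong. The paper does not reprove this result; it cites Veen. Its later lemma records the mechanism: $v_{n-1}$ is detected by the Rognes class $\sum_{i=1}^n t_i\mu_i^{p^{n-1}}$, where $\mu_i$ is Bökstedt's class from the $i$-th circle. This class lies in filtration $2$ (one $t_i$ per term), not in filtration $2n$ as your $t_1t_2\cdots t_n\cdot x$ would.

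The reason it is the right class lives in the dual Steenrod algebra, which your Step 1 throws away. Your ``more precisely'' remark gestures at it, but Steps 2--3 never use it. The $E_2$-input is $k(n-1)_*L_{T^n}\Ff_p\cong H_*(k(n-1);\Ff_p)\otimes\pi_*L_{T^n}\Ff_p$, not $\Ff_p[v_{n-1}]\otimes\pi_*L_{T^n}\Ff_p$. The element $v_{n-1}$ acts on it by zero, and the circle operators act nontrivially on the Steenrod factor: $\sigma_i\bar\tau_j=\mu_i^{p^j}$ up to units. So in mod $p$ homology $d_2(\bar\tau_{n-1})=\sum_i t_i\mu_i^{p^{n-1}}$. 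But $\bar\tau_{n-1}$ is exactly the generator missing from $H_*(k(n-1);\Ff_p)$: it is the class seen by $Q_{n-1}$, which comes from the boundary map of $\Sigma^{2p^{n-1}-2}k(n-1)\xrightarrow{v_{n-1}}k(n-1)\to\Ff_p$. Comparing the two spectral sequences across this cofibre sequence is what shows $v_{n-1}$ lands on the Rognes class.

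Separately, your Step 1 spectral sequence has the wrong abutment. $\HH^{\Ff_p}_*(\pi_*L_{T^{k-1}}\Ff_p)$ computes $\HH(L_{T^{k-1}}\Ff_p/\Ff_p)\simeq L_{T^k}\Ff_p\otimes_{\THH(\Ff_p)}\Ff_p$, which loses the new Bökstedt class. For $k=2$ you would get $\Ff_p[\mu_1]\otimes\Lambda(\sigma\mu_1)$ instead of $\Ff_p[\mu_1,\mu_2]\otimes\Lambda(\sigma\mu_1)$; compare the paper's remark on $\pi_*L_{T^2}k$.

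\textbf{Step 3 and the fallback.} Step 3 needs a real argument, not a degree count. The clean route, as in the paper's lemma, is to map to $F(E_2T^n_+,L_{T^n}\Ff_p)^{T^n}$, the fixed points over the $2$-skeleton $\bigvee_n S^2$ of $BT^n$. This finite limit commutes with $k(n-1)\otimes-$, and its spectral sequence has only $1,t_1,\dots,t_n$ in the cohomological direction, so $d_2$ is the only possible differential. Survival then becomes a concrete statement: no $x$ of degree $2p^{n-1}-1$ in $H_*(k(n-1);\Ff_p)\otimes\pi_*L_{T^n}\Ff_p$ satisfies $\sigma_ix=\mu_i^{p^{n-1}}$ for all $i$ simultaneously. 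This is where Veen's computation of $\pi_*L_{T^n}\Ff_p$ and the bound $n\le p$ enter.

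The simultaneity is essential, and your fallback provably fails. Every circle in $T^n$ becomes a coordinate circle after an automorphism of $T^n$. For a coordinate circle, $(L_{T^n}\Ff_p)^{hS^1}\simeq\THH(L_{T^{n-1}}\Ff_p)^{hS^1}$ is an $\Ee_\infty$-algebra over $\TC^-(\Ff_p)$, hence over $H\Zz_p\simeq\tau_{\ge 0}\TC(\Ff_p)$. For $n\ge 2$, $v_{n-1}$ maps to zero in $k(n-1)_*H\Zz_p$, since this group injects into $H_*(k(n-1);\Ff_p)$, where the Hurewicz image of $v_{n-1}$ vanishes. Concretely, $\bar\tau_0\in H_*(k(n-1);\Ff_p)$ for $n\ge2$, so in the one-circle spectral sequence $t\mu_i^{p^{n-1}}=d_2(\bar\tau_0\mu_i^{p^{n-1}-1})$. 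Thus restricting to a single circle, as in your fallback or in any Step 2 reduction through coordinate circles, loses $v_{n-1}$. The $n$ circle operators have to be handled together.
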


Therefore, it is natural to ask same question replacing $\Ff_p$ by $\BP\langle n\rangle$ (when there is an $\Ee_{\infty}$-form of $\BP\langle n\rangle$). 

This paper is devoted to answer the following question.
\begin{question}\label{question:detection}
Let $X$ be a height $n$ commutative ring spectrum. Is 
\begin{equation*}
    k(n+m)_* (L_{T^m} X)^{h T^m}
\end{equation*}
non-zero?
\end{question}

\subsection{Outline of the paper}
In Section \ref{section2}, we define the Loday construction for commutative algebras in a presentable stable symmetric monoidal category. In Section \ref{section3}, using several ring maps and the result of Veen, we prove the following result.
\begin{theorem}[\autoref{thm:higherredshift}]
Let $p\geq 5$ and $1\leq n+2\leq p$. Then the unit map
\begin{equation*}
k(n+1)_* \to k(n+1)_* (L_{T^n} j_p)^{h T^n}
\end{equation*}
sends $v_{n+1}$ to a nonzero element.
\end{theorem}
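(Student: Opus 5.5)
Here $j_p$ is the connective $p$-complete image-of-J spectrum, an $\Ee_\infty$-ring of height $1$. The plan is to reduce to Veen's theorem for $\Ff_p$ through a chain of $\Ee_\infty$-ring maps, using that the Loday construction $L_{T^m}(-)$ and homotopy fixed points $(-)^{hT^m}$ are functorial in commutative algebras. The indices have to be matched first. Veen's theorem with $T^{n+2}$ gives detection of $v_{n+1}$ in $k(n+1)_*(L_{T^{n+2}}\Ff_p)^{hT^{n+2}}$, valid when $1\le n+2\le p$. This is exactly the hypothesis in the statement. So the goal is to factor the unit map of $(L_{T^{n+2}}\Ff_p)^{hT^{n+2}}$ through $(L_{T^n} j_p)^{hT^n}$.

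The key step is the factorization $L_{T^{n+2}}\Ff_p \simeq L_{T^n}(L_{T^2}\Ff_p)$, which holds because $L_{X\times Y}A \simeq L_X(L_Y A)$. We then need an $\Ee_\infty$-ring map
\[
j_p \longrightarrow (L_{T^2}\Ff_p)^{hT^2}.
\]
I would build it in stages. There is the ring map $j_p\to \ell_p$ (or $\Ss_p\to j_p\to \ell_p$). There is a trace or cyclotomic-type map landing in $\THH(\Zz_p)^{hS^1}$ or $\THH(\Ff_p)^{hS^1}$. Iterating once more should land in $\THH^{(2)}(\Ff_p)^{hT^2}$. The cleanest route, I expect, uses the classical identification $\K(\Ff_q)_p \simeq j_p$ (Quillen) for a suitable prime power $q$, together with a map $\K(\Ff_q)\to \K(\overline{\Ff}_p)$. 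Combined with $\K^{(2)}$ and the iterated trace $\K^{(2)}\to \THH^{(2)}(-)^{hT^2}$, this gives a ring map from $j_p$ toward $(L_{T^2}\Ff_p)^{hT^2}$, possibly after composing with $\Ff_q\to\Ff_p$-type maps. Getting a genuinely $\Ee_\infty$ map from $j_p$ itself, rather than only from a K-theory spectrum, is the step I expect to be the main obstacle. If it fails I would fall back to Veen's statement for a smaller torus, adjusting which torus factors are absorbed.

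Given such a map $f$, apply $L_{T^n}$ and take $hT^n$-fixed points. Together with the natural map $L_{T^n}(B^{hT^2})\to (L_{T^n}B)^{hT^2}$ (Loday construction commuting past limits only laxly, via the universal property of colimits of $\Ee_\infty$-rings), this gives a ring map
\[
(L_{T^n}j_p)^{hT^n}\to (L_{T^n}(L_{T^2}\Ff_p)^{hT^2})^{hT^n}\to (L_{T^{n+2}}\Ff_p)^{hT^{n+2}}.
\]
All maps are unital ring maps, so after smashing with $k(n+1)$ they commute with the unit maps from $k(n+1)_*$. Veen's theorem says the image of $v_{n+1}$ in the target is nonzero. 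Hence its image in $k(n+1)_*(L_{T^n}j_p)^{hT^n}$ is nonzero, which proves the statement.
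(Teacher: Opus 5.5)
Your outer reduction is the same as the paper's. Veen's theorem for $T^{n+2}$ detects $v_{n+1}$ in $k(n+1)_*(L_{T^{n+2}}\Ff_p)^{hT^{n+2}}$ exactly when $1\le n+2\le p$. An $\Ee_\infty$ map $j_p\to(L_{T^2}\Ff_p)^{hT^2}$ is then pushed through $L_{T^n}$ and $(-)^{hT^n}$ using the lax map $L_{T^n}(B^{hT^2})\to(L_{T^n}B)^{hT^2}$ of Remark~\ref{trick}.

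The gap is that this $\Ee_\infty$ map $j_p\to(L_{T^2}\Ff_p)^{hT^2}$ is the whole content of the theorem, and you never construct it. Moreover, the route you single out cannot work:
\begin{itemize}
\item Quillen's equivalence $\K(\Ff_q)^{\wedge}_p\simeq j_p$ needs $q$ prime to $p$ (a topological generator of $\Zz_p^\times$). For $q$ a power of $p$ you get $H\Zz_p$ instead.
\item With $q$ prime to $p$, there is no ring map from $\Ff_q$ to $\Ff_p$ or $\overline{\Ff}_p$. Also $\THH(\Ff_q)^{\wedge}_p\simeq 0$, so every trace out of $\K(\Ff_q)$ dies $p$-adically.
\item $\K(\Ff_q)\to\K(\overline{\Ff}_q)$ only gives $j_p\to ku_p$. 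That, like $j_p\to\ell_p$, just trades the problem for mapping a height-one $K$-theory spectrum into $\THH(\Zz_p)^{hS^1}$, which the paper explicitly says it cannot do for $ku_p$.
\item The iterated trace $\K^{(2)}(\Ff_p)\to\THH^{(2)}(\Ff_p)^{hT^2}$ is of no use without an $\Ee_\infty$ map from $j_p$ into $\K^{(2)}(\Ff_p)$, and nothing in your sketch provides one.
\item Falling back to a smaller torus (e.g.\ via $j_p\to H\Zz_p\to\THH(\Ff_p)^{hS^1}$) only detects $v_n$ in $k(n)_*$. That is not the claimed $v_{n+1}$, and it never uses that $j_p$ has height one.
\end{itemize}

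The paper fills the gap as follows.
\begin{itemize}
\item First, it builds an $\Ee_\infty$ map $j_p\to\K(\Zz_p)_p$ (Lemma~\ref{lemma:jmap}). Since $\K(\Zz_p)_p\to L_{K(1)}\K(\Zz_p)_p$ is an isomorphism on homotopy in degrees $\geq 2$, $\K(\Zz_p)_p$ is the pullback of $\tau_{\ge0}L_{K(1)}\K(\Zz_p)_p$ and $\tau_{\le1}\K(\Zz_p)_p$ over $\tau_{\le1}\tau_{\ge0}L_{K(1)}\K(\Zz_p)_p$.
\item $j_p=\tau_{\ge0}L_{K(1)}\Ss$ maps canonically to $\tau_{\ge0}$ of any $K(1)$-local $\Ee_\infty$-ring. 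Because $\tau_{\le1}j_p\simeq\tau_{\le1}\Ss_p$, its maps to $p$-complete $\Ee_\infty$-rings in $\Sp_{[0,1]}$ are unique, so these maps glue.
\item One then composes with the trace $\K(\Zz_p)_p\to\TC^-(\Zz_p)^{\wedge}_p$.
\item Next comes $\THH(\Zz_p)^{hS^1}\to\THH(\TC^-(\Ff_p))^{hS^1}\to\THH^{(2)}(\Ff_p)^{hT^2}$. It is induced by the $\Ee_\infty$ map $H\Zz_p\simeq\tau_{\ge0}\TC(\Ff_p)\to\TC^-(\Ff_p)$ together with one more use of Remark~\ref{trick}.
\item The $p$-completion introduced by the trace must also be absorbed. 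The paper does this by working with $\THH(-/\Ss_{\Zz_p})$ and identifying $L^{\Ss_{\Zz_p}}_{T^2}\Ff_p\simeq L_{T^2}\Ff_p$. Alternatively, $(L_{T^2}\Ff_p)^{hT^2}$ is a limit of $H\Ff_p$-modules, hence already $p$-complete.
\end{itemize}
With that map in hand, your final paragraph goes through as written.
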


In Section \ref{section4}, we introduce some questions may related to the Question \ref{question:detection}.

\subsection{Acknowledgment}
The author would like to express his sincere gratitude to his advisor, Guozhen Wang, for his continuous support. I am grateful to Gabriel Knoll for helpful comments on an early draft and for sharing the idea on Definition \ref{def:Loday}. Special thanks go to John Rognes for valuable communications and for suggesting the key step (Remark \ref{trick}) in the proof of the main result. Finally, I would like to thank Jingbang Guo and Foling Zou for several useful conversations.
\section{Loday construction and higher topological Hochschild homology}\label{section2}
Given a commutative ring spectrum $A$, and a finite simplicial set $X$. We can define the Loday construction $X\otimes A \in \CAlg$ as follows. For a finite set $Y$, we can define $Y\otimes A= \otimes_{y\in Y} A$. Then we define \begin{equation*}
    X\otimes A:= \colim_{[n]\in \triangle} X([n])\otimes A \in \CAlg.
\end{equation*}
Since any simplicial set $Y$ can be written as a colimit of finite simplicial sets $Y=\colim Y_{\alpha}$, then one defines \begin{equation*}
    Y\otimes A: = \colim Y_{\alpha}\otimes A.
\end{equation*}
This construction can be more explicitly.
\begin{construction}
Given a symmetric monoidal $\infty$-categroy $\calC^{\otimes}$.
We can consider the following $\infty$-operad. 
$\calC_{act}^{\otimes}:= \calC^{\otimes}\times_{\mathrm{Fin}_*} \mathrm{Fin}$.

Recall that a commutative algebra object $R$ is a section of $\calC^{\otimes} \to \mathrm{Fin}_*$ satisfying certain  conditions, i.e. $R\in \Fun_{\mathrm{lax}}(\mathrm{Fin}_*, \calC^{\otimes})$.
This gives a functor $R\in \Fun(\mathrm{Fin},\calC_{act}^{^{\otimes}})$.

By definition, this gives $R\in \Fun(\mathrm{Fin}, \calC^{\otimes}_{act})$. We now define $R^{\otimes}$ to be 
\begin{equation}\label{equ:tensor}
    R^{\otimes} : \mathrm{Fin} \overset{R}{\to} \calC_{act}^{\otimes} \overset{\otimes}{\to} \calC.
\end{equation}

The last functor is given by the image of $\mathrm{id}_{\calC}$ under the equivalence \cite[Proposition III.3.2]{NS18}
\begin{equation*}
    \Fun_{\otimes} (\calC^{\otimes}_{act},\calC) \simeq \Fun_{\mathrm{lax}}(\calC,\calC).
\end{equation*}

Now, for a presentable symmetric monoidal $\infty$-category $\calC$, given a finite simplicial set $X\in \Fun(\triangle^{op}, \mathrm{Fin})$. We can form the following construction
\begin{equation*}
    R^{\otimes X}:= \colim( \triangle^{op} \overset{X}{\to}  \mathrm{Fin} \overset{R}{\to} \calC^{\otimes}_{act} \overset{\otimes}{\to} \calC).
\end{equation*}
This is essentially the construction in \cite{coveringhomology}.
Note that for $X=[n]$, $R^{\otimes [n]}$ is nothing but $R^{\otimes (n+1)}$ (the last map in (\ref{equ:tensor}), sends a sequence $(X_1, \ldots, X_n)$ to $X_1\otimes\cdots \otimes X_n$).
\end{construction}
Recall that for a symmetric monoidal $\infty$-category $\calC$,  we have an equivalence 
\begin{equation*}
    \CAlg(\CAlg(\calC)) \simeq \CAlg(\calC).
\end{equation*}
\begin{definition}\label{def:Loday}
Let $\calC$ be presentable stable symmetric monoidal $\infty$-category. Given a commutative algebra object in $\calC$, $R\in \CAlg(\CAlg(\calC)) \simeq \CAlg(\calC)$.
For any finite simplicial set $X$, the Loday construction is defined to be \begin{equation*}
     R^{\otimes X}:= \colim( \triangle^{op} \overset{X}{\to}  \mathrm{Fin} \overset{R}{\to} \CAlg(\calC)^{\otimes}_{act} \overset{\otimes}{\to} \CAlg(\calC)).
\end{equation*}
Note that any simplicial set $Y$ can be written as a colimit of finite simplicial sets. Hence for $Y$, we write $Y\simeq \colim Y_i$. Then form the colimit 
\begin{equation*}
    R^{\otimes Y}:= \colim R^{\otimes Y_i}.
\end{equation*}
\end{definition}
A typical example is $\calC=\Sp$ and $X=S^1$. This gives the topological Hochschild homology.
\begin{theorem}[{\cite{MSV97}}]
Given a commutative ring spectrum $A$. We have an equivalence of commutative algebras
\begin{equation*}
    S^1\otimes A\simeq \THH(A).
\end{equation*}
\end{theorem}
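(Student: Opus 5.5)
The plan is to identify both sides with the tensoring of $\CAlg(\Sp)$ over spaces, and then compute that tensoring using the pushout decomposition of $S^1$.

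\emph{Step 1.} By \cite[Proposition 3.2.4.7]{HA}, the symmetric monoidal structure on $\CAlg(\Sp)$ induced by the smash product is cocartesian: $A\otimes B$ is the coproduct of $A$ and $B$ in $\CAlg$. Consequently, for a finite set $Y$, the functor $Y\mapsto Y\otimes A$ in Definition~\ref{def:Loday} is the coproduct $\coprod_{y\in Y}A$ in $\CAlg$. Moreover, the maps induced by maps of finite sets are the canonical maps between coproducts, i.e.\ fold maps and inclusions.

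\emph{Step 2.} For a simplicial set $X$, the colimit $\colim_{\triangle^{op}}\coprod_{X_n}A$ is the standard formula for the tensoring $|X|\otimes A$ of the presentable $\infty$-category $\CAlg$ over spaces. Equivalently, it is the colimit over the $\infty$-groupoid $|X|$ of the constant diagram at $A$. This uses that a space is the geometric realization of its simplicial set of simplices, so that $\colim_{|X|}$ decomposes as $\colim_{\triangle^{op}}$ of $\colim_{X_n}$.

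It follows that $X\mapsto X\otimes A$ sends weak equivalences to equivalences and commutes with colimits of spaces. This also justifies the passage to filtered colimits in Definition~\ref{def:Loday}. I expect this step to be the main technical obstacle: one must check that the colimit taken in Definition~\ref{def:Loday} is taken in $\CAlg$, and that the $\infty$-categorical left Kan extension along $\triangle^{op}\to\mathcal{S}$ agrees with the tensoring.

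\emph{Step 3.} Write $S^1\simeq *\sqcup_{*\sqcup *}*$ as a pushout of spaces. Since $-\otimes A$ preserves colimits, this gives
\begin{equation*}
S^1\otimes A\simeq A\otimes_{A\otimes A}A,
\end{equation*}
where the pushout in $\CAlg$ is the relative tensor product and both maps $A\otimes A\to A$ are the multiplication.

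\emph{Step 4.} By definition, or by the cyclic bar construction model, $\THH(A)\simeq A\otimes_{A\otimes A^{op}}A$. For $A$ commutative, $A^{op}\simeq A$, and this relative tensor product acquires its $\Ee_\infty$-structure precisely as a pushout in $\CAlg$. This gives the equivalence of commutative algebras.

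Alternatively, one can skip Steps 3--4. Using the model $S^1=\Delta[1]/\partial\Delta[1]$, which has $n+1$ simplices in degree $n$, Definition~\ref{def:Loday} directly produces the cyclic bar construction $[n]\mapsto A^{\otimes(n+1)}$ with the Hochschild face maps. This is the classical model of $\THH(A)$.
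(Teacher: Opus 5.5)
The paper gives no proof of its own here; it simply quotes \cite{MSV97}. Your proposal therefore supplies what the paper outsources, and it is correct.

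It differs from the cited source mainly in setting. McClure--Schw\"anzl--Vogt work in a point-set model of commutative ring spectra, where identifying the tensor with a simplicial set with the realization of levelwise smash powers needs point-set and cofibrancy care. In your $\infty$-categorical version this is formal. In particular, the obstacle you flag in Step~2 is not one: $K\mapsto K\otimes A$ is a left adjoint from spaces to $\CAlg$, and $\colim_{\triangle^{op}}X_n\simeq|X|$ in spaces. The input that is genuinely needed is that $\CAlg(\Sp)\to\Sp$ preserves sifted colimits. This guarantees that the colimit in Definition~\ref{def:Loday}, which is taken in $\CAlg$, has the usual realization as its underlying spectrum.

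Of your two routes, I would make the last one the main line: the simplicial circle $\Delta[1]/\partial\Delta[1]$ producing the cyclic bar construction as a simplicial object of $\CAlg$. There are three reasons. First, it is essentially the argument of \cite{MSV97} and of the $\infty$-categorical treatment in \cite{NS18}, and it matches the paper's own Construction via the functor $V$ on Connes' cyclic category. Second, it is what actually pins down the $\Ee_\infty$-structure on $\THH(A)$. In Step~4, the claim that this structure is the pushout structure on $A\otimes_{A\otimes A}A$ is the content of the theorem rather than a quotable fact, unless you take the pushout as the definition. Third, the cyclic route keeps track of the circle action, which the paper needs later for the $T^n$-action on $L_{T^n}A\simeq\THH^{(n)}(A)$ and its homotopy fixed points. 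The decomposition $S^1\simeq *\sqcup_{*\sqcup *}*$ forgets that action.

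What Steps~3--4 buy in exchange is the relative tensor product description. This is the form the paper itself uses for spheres, $S^n=D^n\cup_{S^{n-1}}D^n$, via \cite[Proposition 2.2]{Veen18}.
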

We can also consider other finite simplicial sets rather than $S^1$, for instance $T^n=(S^1)^{\times n}$. Hence we define $L_{T^n}A=T^n\otimes A$. We can prove the following theorem.
\begin{theorem}
Given a commutative ring spectrum $A$. We have an equivalence of commutative algebras
\begin{equation*}
L_{T^n} A\simeq \THH^{(n)} (A), n\geq 1.
\end{equation*}
Here, $\THH^{(n)}(A):= \THH(\THH^{(n-1)}(A))$, $n\geq 2$. $\THH^{(1)}(A)=\THH(A)$.
\end{theorem}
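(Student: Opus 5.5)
We argue by induction on $n$, using two formal properties of the Loday construction together with the McClure--Schw\"anzl--Vogt equivalence $S^1\otimes A\simeq \THH(A)$ stated above. The case $n=1$ is exactly that theorem, since $L_{T^1}A=S^1\otimes A$.

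The first property is that the functor $Y\mapsto Y\otimes A$ preserves colimits of simplicial sets. By Definition \ref{def:Loday} it is defined as a colimit over finite pieces, and for finite $X$ it is a colimit over $\triangle^{op}$ of finite coproducts in $\CAlg$. In $\CAlg$ the coproduct is the tensor product, so $Y\otimes A$ is the left Kan extension along $\mathrm{Fin}\to \mathrm{sSet}$ of $F\mapsto \bigotimes_F A$. Hence $(-)\otimes A$ is the colimit-preserving functor $\mathrm{sSet}\to\CAlg$ that the universal property of presheaves attaches to the tensoring of $\CAlg$ over spaces. In particular $Y\otimes A\simeq |Y|\otimes A$, with the right-hand side being the tensoring of $A$ by the space $|Y|$, and this depends only on the weak homotopy type of $Y$.

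The second property is the associativity of the tensoring: for simplicial sets $X,Y$ there is a natural equivalence of commutative algebras
\begin{equation*}
(X\times Y)\otimes A\simeq X\otimes (Y\otimes A).
\end{equation*}
We prove this by first taking $X=F$ and $Y=G$ to be finite sets. In that case both sides are $\bigotimes_{F\times G}A$, with the identification coming from the coproduct structure in $\CAlg$. For general simplicial sets we use the diagonal of the bisimplicial object $([p],[q])\mapsto (X_p\times Y_q)\otimes A$. The colimit over $\triangle^{op}\times\triangle^{op}$ equals the colimit over the diagonal $\triangle^{op}$, because $\triangle^{op}$ is sifted. Since $(X\times Y)_p=X_p\times Y_p$, this colimit computes the left-hand side. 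Computing the same colimit iteratively, first over $q$ and then over $p$, gives $X\otimes(Y\otimes A)$. To do so we use that $F\otimes(-)=(-)^{\otimes F}$ commutes with sifted colimits in $\CAlg$: the finite tensor power commutes with geometric realizations in each variable, and sifted colimits in $\CAlg$ are computed underlying.

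Granting these, we take $X=S^1$ and $Y=T^{n-1}$ and obtain
\begin{equation*}
L_{T^n}A=(S^1\times T^{n-1})\otimes A\simeq S^1\otimes L_{T^{n-1}}A\simeq S^1\otimes \THH^{(n-1)}(A)\simeq \THH(\THH^{(n-1)}(A))=\THH^{(n)}(A).
\end{equation*}
The middle equivalence is the inductive hypothesis, applied functorially via $S^1\otimes(-)$. The last equivalence is the McClure--Schw\"anzl--Vogt theorem applied to the commutative ring $\THH^{(n-1)}(A)$.

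We expect the main obstacle to be making the associativity step precise $\infty$-categorically. One must check that the bisimplicial diagram is genuinely functorial, which requires the functor $\mathrm{Fin}\to\CAlg$, $F\mapsto A^{\otimes F}$, to be product-compatible in the form $A^{\otimes (F\times G)}\simeq (A^{\otimes G})^{\otimes F}$ naturally in both variables. Our first approach is to avoid explicit diagrams altogether. Both sides of the associativity equivalence are colimit-preserving in each simplicial variable. Both are determined by $A\in\CAlg$ through the cotensor adjunction $\mathrm{Map}_{\CAlg}(K\otimes A,B)\simeq \mathrm{Map}_{\mathcal S}(K,\mathrm{Map}_{\CAlg}(A,B))$. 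Associativity then follows from the Yoneda lemma and the equivalence $\mathrm{Map}(K\times L,-)\simeq\mathrm{Map}(K,\mathrm{Map}(L,-))$ in spaces.
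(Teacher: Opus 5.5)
Your proof is correct and takes the same route as the paper: induction on $n$ via $T^n=S^1\times T^{n-1}$, the associativity $(X\times Y)\otimes A\simeq X\otimes(Y\otimes A)$, and McClure--Schw\"anzl--Vogt applied to $\THH^{(n-1)}(A)$. The paper only asserts the associativity step ``by the definition,'' and your Yoneda argument through the tensoring of $\CAlg$ over spaces supplies the missing justification, with one caveat: your phrasing of the first property is loose, since $\mathrm{sSet}\to\CAlg$ does not preserve all $1$-categorical colimits and is not a left Kan extension along the discrete inclusion $\mathrm{Fin}\to\mathrm{sSet}$, but the fact you actually use, $Y\otimes A\simeq|Y|\otimes A$, still holds because $(-)\otimes A\colon\mathcal{S}\to\CAlg$ preserves colimits.
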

\begin{proof}
We do the induction on $n$. When $n=1$, the equivalence is obviously hold. 
Suppose $n=k$ we have a such equivalence, we need to produce an equivalence for $n=k+1$.
Note that we have following equivalences
$L_{T^{k+1}} A= T^{k+1}\otimes A \simeq (S^1)^{\times (k+1)} \otimes A\simeq (S^1\times T^k)\otimes A$. 
By the definition we get
\begin{equation*}
  (S^1\times T^k)\otimes A\simeq S^1\otimes(T^k\otimes A).
\end{equation*}

Therefore, we have $L_{T^{k+1}} A\simeq \THH(L_{T^k} A)\simeq \THH^{(k+1)} A$. 
\end{proof}
So we can see that there is a $T^n$-action on $L_{T^n} A$ (or $\THH^{(n)}(A)$).

Note that, $T^n\otimes A\simeq \colim_k (\suspen_{+} T^n_k\otimes A)\simeq (\suspen_{+} \colim_{k} T^n_k)\otimes A\simeq \suspen_+ T^n\otimes A $. The later term is obviously a module over $\suspen_{+} T^n$ (in fact an algebra over $\suspen_+ T^n$), i.e. $T^n\otimes A\in \Mod_{\suspen_{+} T^n}\simeq \Fun(BT^n,\Sp)$.

The last equivalence $\Mod_{\suspen_{+} T^n} \simeq \Fun(BT^n, \Sp)$ can be found in \cite[Page 2]{Ath}.

It says that given a connected space $X$, the category $\Sp^X:= \Fun(X, \Sp)$ is equivalent to  $\Mod_{\mathrm{End}(i_! \Ss)}$. The spectrum $\mathrm{End}(i_!\Ss)$ can be identified as $\suspen_+ \Omega X $. Setting $X= BT^n$, this yields \begin{equation*}
    \Sp^{B T^n} \simeq \Mod_{\suspen_{+} \Omega B T^n} \simeq \Mod_{\suspen_{+} T^n}.
\end{equation*}

So $L_{T^n}A$ can be called higher topological Hochschild homology. We can also define the relative higher topological Hochschild homology.

In fact, this $T^n$-action can be produced more explicitly.

\begin{construction}
Let $\calC$ be a presentable stable symmetric monoidal $\infty$-category. Given a commutative algebra $R\in \CAlg(\calC)$.
We define $R^{\otimes T^n}$ or $L_{T^n}R$ to be the colimit of the following diagram
\begin{equation}\label{equ:highertori}
 (\Lambda^{op})^{\times n} \overset{V^o\times \cdots V^o}{\to} \mathrm{Fin}^{\times n}  \to \mathrm{Fin} \overset{R}{\to} \CAlg(\calC)_{act}^{\otimes} \overset{\otimes}{\to} \CAlg(\calC).
\end{equation}
Here $V^o: \Lambda^{op} \simeq \Lambda \overset{V}{\to} \mathrm{Fin}$ (c.f. \cite[Page 145]{NS18}).
Since there is a natural functor \cite[Proposition B.5]{NS18} \begin{equation*}
    \Fun(N(\Lambda^{op}),\calC) \to \Fun(B S^1, \calC)
\end{equation*}
given by taking the geometric realization.
\end{construction}
Note that there is an equivalence $\Fun(A\times B,\calC) \simeq \Fun(A,\Fun(B,\calC))$ for any $\infty$-categories $A,B, \calC$.
Thus, the colimit of (\ref{equ:highertori}) yields an object in $\Fun(B T^n,\CAlg(\calC))$.
\begin{definition}\label{def:relativetorihomology}
Let $B$ be a commutative ring spectrum. For any commutative $B$-algebra $A$, i.e. $A\in\CAlg(\Mod_B)$. We define $L_{T^n}^B A:= T^n\otimes_{B} A$.
For same reason, we can prove $L_{T^n}^B A\simeq \THH^{(n)}(A/B)$. Here, $\THH^{(n)}(A/B):= \THH(\THH^{(n-1)}(A/B)/B)$, $\THH^{(1)}(A/B)=\THH(A/B)$.
\end{definition}

Suppose $k$ is a perfect field of characteristic $p$. Then we have the following theorem.
\begin{definition}
Let $B_1=\Ff_p[\mu], |\mu=2|$. For $n\geq 2$, set 
\begin{equation*}
    B_n= \mathrm{Tor}^{B_{n-1}}_*(\Ff_p,\Ff_p).
\end{equation*}
\end{definition}
\begin{theorem}
When $n\leq 2p$, there is an $k$-Hopf algebra isomorphism
\begin{equation*}
    \pi_*(L_{S^n}k)\simeq B_n\otimes_{\Ff_p}k.
\end{equation*}
\end{theorem}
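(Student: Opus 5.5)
Since $L_{S^n}k \simeq L_{S^n}\Ff_p\otimes_{\Ff_p} k$ as commutative $k$-algebras, I would first reduce to $k=\Ff_p$. The Loday construction is a colimit of finite tensor powers, and base change $-\otimes_{\Ff_p}k$ is symmetric monoidal and preserves colimits. Together with flatness of $k$ over $\Ff_p$, this gives $\pi_*(L_{S^n}k)\cong \pi_*(L_{S^n}\Ff_p)\otimes_{\Ff_p}k$ as Hopf algebras. The Hopf structure comes from the pinch/fold maps $S^n\to S^n\vee S^n$ and $S^n\to *$, together with $(X\vee Y)\otimes A\simeq (X\otimes A)\otimes_A(Y\otimes A)$. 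So it suffices to prove $\pi_*(L_{S^n}\Ff_p)\cong B_n$.

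I would argue by induction on $n$. For $n=1$, the statement is $\pi_*\THH(\Ff_p)\cong \Ff_p[\mu]$ with $|\mu|=2$, which is Bökstedt's theorem, and this is our $B_1$. For the inductive step, write $S^n$ as the homotopy pushout $*\leftarrow S^{n-1}\to *$ (suspension). Since $X\mapsto X\otimes A$ preserves colimits in $\CAlg$, and pushouts in $\CAlg$ are relative tensor products, this gives
\begin{equation*}
L_{S^n}\Ff_p\simeq \Ff_p\otimes_{L_{S^{n-1}}\Ff_p}\Ff_p .
\end{equation*}
The Künneth (Tor) spectral sequence then reads
\begin{equation*}
E^2_{s,t}=\mathrm{Tor}^{\pi_*L_{S^{n-1}}\Ff_p}_{s,t}(\Ff_p,\Ff_p)\Rightarrow \pi_{s+t}L_{S^n}\Ff_p .
\end{equation*}
By induction, $E^2\cong \mathrm{Tor}^{B_{n-1}}(\Ff_p,\Ff_p)=B_n$. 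It remains to show that this spectral sequence collapses and that there are no multiplicative or Hopf extensions.

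Before attacking collapse, I would compute $B_n$ explicitly. $B_1$ is polynomial on a class of degree $2$, so $B_2$ is exterior on a class of degree $3$, and $B_3$ is divided powers on a class of degree $4$, i.e. $\bigotimes_i \Ff_p[\gamma_{p^i}]/(\gamma_{p^i}^p)$. Iterating, $\mathrm{Tor}$ of a truncated polynomial algebra $\Ff_p[x]/x^p$ gives an exterior class $\sigma x$ together with a divided power class $\varphi x$ (the transpotence) in degree $p|x|+2$. Tracking degrees, one checks that while $n\le 2p$ the generators stay below the range where nontrivial differentials can first occur.

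Collapse is the step I expect to be the main obstacle. I would follow Bökstedt's and Veen's approach. The Tor spectral sequence is a spectral sequence of $\Ff_p$-Hopf algebras, since all the maps involved are maps of augmented commutative $\Ff_p$-algebras coming from $S^{n-1}\otimes\Ff_p$. The first possible differential hits classes on which the $p$-th power or Dyer–Lashof operations act nontrivially. The shortest differential on a transpotence element $\varphi x$ lands in total degree roughly $p|x|+1$, and indecomposable primitives can only be hit from indecomposables. Comparing with the primitives and indecomposables of the $E^2$-term, valid up to degree about $2p^2$ when $n\le 2p$, should show that every source or target of a potential differential is zero, so $E^2=E^\infty$. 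Multiplicative extensions such as $x^p$ for exterior or divided power generators would be excluded by Dyer–Lashof operations: $x^p=Q^{|x|/2}x$ (and $\beta Q$ for odd-degree classes), and for commutative $H\Ff_p$-algebras these vanish on the relevant classes. Finally, the Hopf algebra isomorphism follows because the filtration is by Hopf subalgebras and the associated graded is already the free/cofree object $B_n$.
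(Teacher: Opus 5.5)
Your outline matches the paper's: reduce to $k=\Ff_p$, then identify $\pi_*L_{S^n}\Ff_p$ with $B_n$. However, the reduction is justified incorrectly, because nothing in your argument uses that $k$ is perfect. The simplicial levels of $L_{S^n}k$ are tensor powers over $\Ss$, and base change along $\Ff_p\to k$ does not carry $\Ff_p^{\otimes m}$ to $k^{\otimes m}$. Already $\pi_0(k\otimes k)=k\otimes_{\Ff_p}k$, whereas $\pi_0\bigl((\Ff_p\otimes\Ff_p)\otimes_{\Ff_p}k\bigr)=k$. Your argument would apply verbatim to $k=\Ff_p(t)$, where the conclusion fails: $\pi_1\THH(\Ff_p(t))\cong\Omega^1_{\Ff_p(t)/\Ff_p}\neq 0$. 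The equivalence $L_{S^n}k\simeq L_{S^n}\Ff_p\otimes_{\Ff_p}k$ only emerges after realization and needs perfectness. One starts from $\THH(k)\simeq\THH(\Ff_p)\otimes_{\Ff_p}k$ (Hesselholt--Madsen; ultimately $\HH(k/\Ff_p)\simeq k$ for perfect $k$). One then inducts using $L_{S^n}k\simeq k\otimes_{L_{S^{n-1}}k}k$, where base change genuinely commutes with the relative tensor products. This is the paper's lemma.

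For $k=\Ff_p$ the statement is exactly \cite[Proposition 3.6]{Veen18}, which the paper simply cites. Your attempt to rederive it is not a proof. Collapse is asserted (``one checks'', ``should show'', control of primitives ``up to degree about $2p^2$'') without the bookkeeping being done, and you never identify where $n\le 2p$ enters. This cannot be waved through, since nontrivial differentials do occur in analogous B\"okstedt-type spectral sequences of Hopf algebras (e.g.\ for $\THH(\Zz)$). The extension step also rests on a false principle: power operations do not vanish in commutative $\Ff_p$-algebras. In $\pi_*\THH(\Ff_p)=\Ff_p[\mu]$ one has $\mu^p=Q^1\mu\neq 0$, and that is your own case $n=1$. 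The divided power factors of $B_n$ are not free algebras, so the ``free/cofree'' remark does not remove the need for a real argument here. That argument would combine filtration and degree counting with the fact that the suspension $\sigma$ commutes with Dyer--Lashof operations and kills decomposables. If you cite Veen for $\pi_*L_{S^n}\Ff_p\cong B_n$ as Hopf algebras and fix the reduction as above, you recover the paper's proof.
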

\begin{proof}
    This follows from the following lemma and \cite[Proposition 3.6]{Veen18}.
\end{proof}
\begin{lemma}
There is an equivalence
\begin{equation*}
    L_{S^n} k\simeq L_{S^n}\Ff_p\otimes_{\Ff_p} k
\end{equation*}
\end{lemma}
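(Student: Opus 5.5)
The plan is to show that the Loday construction commutes with extension of scalars along a map of commutative rings. Here $k$ is a commutative $\Ff_p$-algebra, viewed as a commutative algebra in $\Mod_{H\Ff_p}$ through $\Ff_p\to k$. The key point is that the base change functor
\begin{equation*}
    -\otimes_{\Ff_p} k\colon \CAlg(\Sp)_{\Ff_p/}\simeq \CAlg(\Mod_{\Ff_p}) \to \CAlg(\Mod_{k})
\end{equation*}
is symmetric monoidal and preserves colimits. Granting that, the Loday construction of Definition \ref{def:Loday} is built only from tensor products and colimits, so it should commute with base change.

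The steps, in order, are as follows.

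\textbf{Step 1.} Note that $L_{S^n}k$ is computed in $\CAlg(\Sp)$, where the coproduct is $\otimes_{\Ss}$. Also note that $L_{S^n}\Ff_p$ receives the unit map $\Ff_p\to L_{S^n}\Ff_p$ from the basepoint of $S^n$, which makes $L_{S^n}\Ff_p\otimes_{\Ff_p}k$ meaningful.

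\textbf{Step 2.} Work in the slice category $\CAlg_{\Ff_p/}$. In this slice, coproducts are relative tensor products $\otimes_{\Ff_p}$. Since $k=\Ff_p\otimes_{\Ff_p}k$, we want to compare, for each finite set $Y$, the objects $Y\otimes k=k^{\otimes_{\Ss} Y}$ and $(Y\otimes \Ff_p)\otimes_{\Ff_p}k=\Ff_p^{\otimes_{\Ss}Y}\otimes_{\Ff_p}k$. These are \emph{not} equal for general $Y$, so the statement must be read in a pointed way. The right approach is to use the pushout formula
\begin{equation*}
    L_X(A\otimes_B C)\simeq L_XA\otimes_{L_XB}L_XC.
\end{equation*}
This holds because $X\otimes -$ is a left adjoint on $\CAlg$: it is left adjoint to $\mathrm{Map}(X,-)$, i.e.\ the cotensor. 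Left adjoints preserve pushouts.

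\textbf{Step 3.} Apply Step 2 to $k\simeq \Ff_p\otimes_{\Ff_p}k$, or more usefully to the fact that $k$ is \'etale-like over $\Ff_p$. For $k$ perfect, $\THH(k/\Ff_p)$-type terms are trivial: the cotangent complex $L_{k/\Ff_p}$ vanishes because $k$ is perfect, so $k\otimes_{\Ff_p}k\to k$ behaves like an \'etale multiplication on homotopy. One then shows that the natural map $k\to L_X^{\Ff_p}k$ (the relative Loday construction over $\Ff_p$) is an equivalence for connected $X$. This is done by induction on the cells of $X$, reducing to $\THH(k/\Ff_p)\simeq k$ via the HKR filtration and the vanishing of $L_{k/\Ff_p}$.

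\textbf{Step 4.} Combine these. Write $k\simeq \Ff_p\otimes_{\Ss}\Ss\ldots$; more precisely, use the pushout $k\simeq k\otimes_{\Ff_p}\Ff_p$ together with the base change formula
\begin{equation*}
    L_Xk\simeq L_X\Ff_p\otimes_{L_X^{\mathbb S}\Ff_p\ldots}
\end{equation*}
This ultimately gives
\begin{equation*}
    L_{S^n}k\simeq L_{S^n}\Ff_p\otimes_{\Ff_p}L^{\Ff_p}_{S^n}k\simeq L_{S^n}\Ff_p\otimes_{\Ff_p}k,
\end{equation*}
where $L^{\Ff_p}$ denotes the Loday construction in $\CAlg(\Mod_{\Ff_p})$. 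The first equivalence is a transitivity formula for Loday constructions along $\Ss\to\Ff_p\to k$, proved by writing each simplicial level as iterated tensor products and commuting colimits. The main obstacle is Step 3, the rigidity $L^{\Ff_p}_{S^n}k\simeq k$ for perfect $k$. I would first try to prove it for $n=1$ using the vanishing of Hochschild homology of a perfect $\Ff_p$-algebra relative to $\Ff_p$, and then induct using $L_{S^{n}}\simeq$ the pushout $k\otimes_{L_{S^{n-1}}k}k$.
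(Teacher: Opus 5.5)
\textbf{The gap is Step 4.} The ``transitivity formula'' $L_Xk\simeq L_X\Ff_p\otimes_{\Ff_p}L^{\Ff_p}_Xk$ does not follow from writing simplicial levels as iterated tensor products and commuting colimits. For a general tower $\Ss\to A\to B$ it is false. Take $\Ss\to\Zz\to\Ff_p$ and $X=S^1$. The formula would give $\THH(\Ff_p)\simeq\THH(\Zz)\otimes_{\Zz}\HH(\Ff_p/\Zz)\simeq(\THH(\Zz)/p)\otimes_{\Ff_p}\HH(\Ff_p/\Zz)$. The right-hand side has $\pi_{2p-1}\neq 0$, coming from $\THH_{2p-1}(\Zz)\cong\Zz/p$ tensored with $\pi_0\HH(\Ff_p/\Zz)=\Ff_p$. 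But $\THH_*(\Ff_p)=\Ff_p[u]$ is concentrated in even degrees.

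What is formal is the description of colimits in $\CAlg_{\Ff_p/}$: $L^{\Ff_p}_Xk\simeq L_Xk\otimes_{L_X\Ff_p}\Ff_p$, which is base change along the augmentation $L_X\Ff_p\to\Ff_p$. Combined with your Step 3, this only shows that the natural comparison map $\phi\colon L_X\Ff_p\otimes_{\Ff_p}k\to L_Xk$ becomes an equivalence after applying $-\otimes_{L_X\Ff_p}\Ff_p$. Given Step 3, your transitivity formula for perfect $k$ is equivalent to the lemma itself, so Step 4 as written assumes what is to be proved. Separately, applying the pushout formula of Step 2 to $k\simeq k\otimes_{\Ff_p}\Ff_p$ is tautological, and the displayed formula in Step 4 is left unfinished.

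\textbf{How to close it.} A Nakayama argument supplies the missing step. For connected $X$, $L_X\Ff_p$ is connective with $\pi_0L_X\Ff_p\cong\Ff_p$. The cofiber $C$ of $\phi$ is then a connective $L_X\Ff_p$-module with $C\otimes_{L_X\Ff_p}\Ff_p\simeq 0$. If $\pi_mC$ were its lowest nonzero homotopy group, then $\pi_m(C\otimes_{L_X\Ff_p}\Ff_p)\cong\pi_mC\neq 0$, a contradiction. Hence $C\simeq 0$ and $\phi$ is an equivalence.

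\textbf{Comparison with the paper.} With this patch your route is self-contained and works for every connected $X$, in particular reproving the case $n=1$. The paper instead takes $n=1$ from Hesselholt--Madsen \cite[Corollary 5.5]{HM97}. It then inducts using $S^n=D^n\cup_{S^{n-1}}D^n$, i.e.\ $L_{S^n}k\simeq k\otimes_{L_{S^{n-1}}k}k$, substituting the inductive hypothesis and using base change. So the paper never needs the relative rigidity $L^{\Ff_p}_{S^n}k\simeq k$ of your Step 3.
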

\begin{proof}
We prove this by induction on $n$.
\begin{itemize}
    \item $n=1$, this is true by \cite[Corollary 5.5]{HM97}.
    \item Note that $S^n$ can be written as $D^n\cup_{S^{n-1}} D^n$, hence by \cite[Proposition 2.2]{Veen18}  we have equivalences \begin{equation*} 
        L_{T^n}k \simeq k\otimes_{L_{T^{n-1}}k} k\simeq 
        (\Ff_p\otimes_{\Ff_p}k)\otimes_{L_{T^{n-1}}\Ff_p \otimes_{\Ff_p}k }(\Ff_p\otimes_{\Ff_p}k)
        \simeq L_{T^n}\Ff_p\otimes_{\Ff_p} k.
    \end{equation*}
\end{itemize}
\end{proof}

It seems that we should we the following equivalence,
\begin{equation*}
    \pi_*L_{T^n} k\simeq \pi_* L_{T^n}\Ff_p\otimes_{\Ff_p}k.
\end{equation*}

\begin{question}\label{question:THHn}
Given $1\leq n\leq p$ when $p\geq 5$ and $1\leq n\leq 2$ when $p=3$.
    Do we have an equivalence of graded $\Ff_p$-algebras ?
    \begin{equation*}
    \pi_*L_{T^n} k\simeq \bigotimes_{U\subseteq \mathbf{n}} B_U
    \end{equation*}
\end{question}
\begin{remark}
Suppose $k$ is a perfect field of characteristic $p$, then this is true for $n=2$. 
We have the following formula
\begin{align*}
 \pi_*(L_{T^2}k) & \simeq \THH_*(\THH(k))\simeq
    \THH_*(k[\Omega S^3]) \\
    & \simeq \pi_*(\THH(k)\otimes_{k} \HH(k[t]/k))\\
    & \simeq k[u]\otimes k[t] \otimes \Lambda_{k}(\sigma t).
\end{align*}

The second equivalence is obtained by \cite{THHThom} and \cite[Corollary 5.5]{HM97}. 
Here $|t|=2, |\sigma t|=3$. 
\end{remark}

\section{Homotopy fixed points of higher topological Hochschild homology}\label{section3}

Veen proved the following theorem.
\begin{theorem}[{\cite{Veen18}}]
Let $p\geq 5$ and $1\leq n\leq p$ or $p=3$ and $1\leq n\leq 2$.
The unit map \begin{equation*}
    k(n-1)_*\to k(n-1)_*(L_{T^n} \Ff_p)^{h T^n}
\end{equation*}
sends $v_{n-1}$ to a nonzero element in the right hand side.
\end{theorem}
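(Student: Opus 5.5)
This statement is Veen's theorem \cite{Veen18}, restated at the start of Section \ref{section3} so it can be used as input. The plan below follows the strategy of Veen's argument, not a new proof.

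\emph{Step 1: compute the input.} For $1\leq n\leq p$ (with $p\geq 5$), or $n\leq 2$ (with $p=3$), we need $\pi_*L_{T^n}\Ff_p$ as an $\Ff_p$-algebra. Write $T^n$ as an iterated product of circles and use the decomposition of tori into spheres under the Loday construction. Via the isomorphism $\pi_*(L_{S^m}\Ff_p)\simeq B_m$ for $m\leq 2p$ above, this gives
\begin{equation*}
\pi_*L_{T^n}\Ff_p\simeq \bigotimes_{U\subseteq \mathbf{n}} B_{|U|},
\end{equation*}
which is the positive answer to Question \ref{question:THHn} for $k=\Ff_p$. The restriction $n\leq p$ is exactly what keeps every $B_m$ that occurs in the range where Veen's computation of $B_m$ is valid. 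There the relevant classes have divided power and truncated structure compatible with the $\mathrm{Tor}$ iteration.

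\emph{Step 2: run the spectral sequence.} Set up the homotopy fixed point spectral sequence for $k(n-1)\otimes L_{T^n}\Ff_p$ with respect to $T^n$:
\begin{equation*}
E_2=\Ff_p[t_1,\dots,t_n]\otimes k(n-1)_*L_{T^n}\Ff_p \Rightarrow k(n-1)_*(L_{T^n}\Ff_p)^{hT^n}.
\end{equation*}
Since $k(n-1)\otimes H\Ff_p$ is a sum of suspensions of $H\Ff_p$, the coefficients are $\mathcal{A}_*/\!/E(n-1)$-type dual Steenrod quotients tensored with Step 1. The $T^n$-action enters through the $d_2$-differentials $d_2(x)=\sum_i t_i\,\sigma_i x$, where $\sigma_i$ is the suspension along the $i$-th circle.

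\emph{Step 3: identify $v_{n-1}$.} The class $v_{n-1}$ in $k(n-1)_*$ maps to a class that, in the Adams-type filtration, is detected by a product of the form $t_1\cdots t_{n-1}\cdot\mu_U$. Here $\mu_U$ is the top generator coming from the $B_{n}$ factor, the iterated suspension of $\bar\tau$-classes. The mechanism generalizes the $n=1$ fact that $v_0=p$ is detected by $t\mu$ in $\THH(\Ff_p)^{hS^1}$. To show the image is nonzero, it suffices to check two things:
\begin{enumerate}
\item[(a)] the unit map lands on this representative, which is checked by comparing with the sphere through the map $\Ss\to L_{T^n}\Ff_p$ and its multiplicativity;
\item[(b)] this representative survives and is not hit by any differential.
\end{enumerate}

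\emph{Main obstacle.} Part (b) is the hard step. It requires controlling all differentials in a multi-variable spectral sequence whose $E_2$-page is large. I would first reduce to the case of a single circle by restricting along $S^1\subset T^n$ diagonally, and use a comparison map to an iterated $S^1$-fixed point spectrum $(\cdots(L_{T^n}\Ff_p)^{hS^1}\cdots)^{hS^1}$ to induct on $n$. At each stage, a degree count would bound the possible sources of differentials hitting the class. This degree count is where the bound $n\leq p$ (respectively $n\leq 2$ for $p=3$) is used.
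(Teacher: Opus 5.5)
\textbf{There is a genuine gap.} Step 3, which carries the actual content, targets the wrong class, and (a) is never really argued.

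Your class $t_1\cdots t_{n-1}\mu_U$, with $\mu_U$ taken from the $B_n$ factor, has the wrong degree and filtration:
\begin{itemize}
\item for $n=1$ it is just $\mu$, not $t\mu$;
\item for $n=2$ the factor $B_2=\Lambda(\sigma\mu)$ lives in degrees $0$ and $3$, so $t_1\mu_U$ has degree $1\neq 2p-2=|v_1|$;
\item for $n\geq 3$, $B_n$ (divided powers and so on) has no top generator.
\end{itemize}
The class used by Veen, and recorded in the paper's lemma for perfect fields, is the Rognes class $\sum_{i=1}^n t_i\mu_i^{p^{n-1}}$. It is linear in the $t_i$ (filtration $2$) and is built from the degree-$2$ classes $\mu_i$ of the $n$ circle factors $B_{\{i\}}=\Ff_p[\mu_i]$, not from $B_n$.

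The reason this class represents the image of $v_{n-1}$ is as follows. In the $H\Ff_p$-homology spectral sequence, B\"okstedt's relation $\sigma_i\bar\tau_{n-1}=\mu_i^{p^{n-1}}$ gives $d_2(\bar\tau_{n-1})=\sum_i t_i\mu_i^{p^{n-1}}$. The element $v_{n-1}$ has Adams filtration one, detected by $\tau_{n-1}$. Finally, $H_*k(n-1)=(\calA/\!/E(Q_{n-1}))_*$ omits exactly $\bar\tau_{n-1}$; your $\calA_*/\!/E(n-1)$ would instead be $H_*\BP\langle n-1\rangle$. Your ``comparison with the sphere and multiplicativity'' does not replace this argument. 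Note also that your spectral sequence converges to $\pi_*(k(n-1)\otimes L_{T^n}\Ff_p)^{hT^n}$, not to $k(n-1)_*(L_{T^n}\Ff_p)^{hT^n}$. You must pass through the natural map from the latter to the former, which suffices for nonvanishing.

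\textbf{Your plan for (b) cannot work either.} For $n\geq 2$, restricting to the diagonal circle, or to any circle subgroup $\phi\colon S^1\to T^n$, kills the image of $v_{n-1}$:
\begin{itemize}
\item Since $\phi$ is a homomorphism, functoriality of the Loday construction gives an $S^1$-equivariant $\Ee_\infty$-map $\THH(\Ff_p)=L_{S^1}\Ff_p\to\phi^*L_{T^n}\Ff_p$.
\item Hence $(L_{T^n}\Ff_p)^{h\phi(S^1)}$ is an algebra under $\THH(\Ff_p)^{hS^1}$, and therefore under $H\Zz_p$, as in the paper's height $0$ argument.
\item But $v_{n-1}$ maps to zero in $k(n-1)_*H\Zz_p$. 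Its mod $p$ Hurewicz image vanishes because it has positive Adams filtration, and $k(n-1)_*H\Zz_p\to k(n-1)_*H\Ff_p$ is injective because $p=0$ on $k(n-1)$.
\end{itemize}
Correspondingly, the Rognes class restricts to $t(\sum_i\mu_i)^{p^{n-1}}$, which must vanish at $E_\infty$. So the detection is a genuinely $n$-dimensional phenomenon.

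Moreover, $(\cdots(X^{hS^1})\cdots)^{hS^1}\simeq X^{hT^n}$, so comparing with the iterated fixed points reduces nothing. An induction on $n$ would need a one-step redshift statement for $\TC^-$, which is the whole difficulty. What is needed instead is Veen's direct argument (Props.~7.4, 7.5, 7.7, as cited in the paper): given the algebra structure from Step 1, no differential in the full $T^n$ spectral sequence hits $\sum_i t_i\mu_i^{p^{n-1}}$.

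Two smaller points:
\begin{itemize}
\item The torus splitting in Step 1 is not formal, since Loday constructions are not stable invariants in general; it requires Veen's computation.
\item The bound $n\leq p$ cannot come from the range of the $B_m$, because $\pi_*L_{S^m}\Ff_p\cong B_m$ holds for all $m\leq 2p$.
\end{itemize}
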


In particular, under the conditions of above theorem, $k(n-1)_* (L_{T^n} \Ff_p)^{h T^n}\ne 0$. Note that, there is a ring map
\begin{equation*}
    k(n-1)\otimes(L_{T^n}\otimes \Ff_p)^{h T^n} \to K(n-1)\otimes (L_{T^n}\otimes \Ff_p)^{h T^n}.
\end{equation*}
So, if the right hand side is nonzero, then the left hand side is nonzero. In the contrast, we can not determine the right hand side if we only know the information about left hand side.
We can ask the following question.
\begin{problem}
What is the height of the commutative ring spectrum $(L_{T^n}\Ff_p)^{h T^n}$?
\end{problem}
Combining the known results and trace method, we can prove the following theorem.
\begin{theorem}
The height of commutative algebra $(L_{T^n} \Ff_p)^{h T^n}$ is at most $n-1$.
\end{theorem}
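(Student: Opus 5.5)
Since height is defined by the vanishing of $K(m)_*$, the plan is to show $K(n)_*(L_{T^n}\Ff_p)^{hT^n}=0$; by Hahn's theorem this forces $K(m)_*$ to vanish for all $m\ge n$, so the height is at most $n-1$. The phrase ``trace method'' suggests factoring a unit map through an algebraic K-theory spectrum whose height is already known. Concretely, the iterated trace
\begin{equation*}
\K^{(n)}(\Ff_p)\to \THH^{(n)}(\Ff_p)^{hT^n}\simeq (L_{T^n}\Ff_p)^{hT^n}
\end{equation*}
is a map of commutative ring spectra. This alone does not bound the height of the target, since ring maps can only increase vanishing in the wrong direction. So I would instead exploit that $(L_{T^n}\Ff_p)^{hT^n}$ is an algebra over something of low height.

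The cleanest route is as follows. $L_{T^n}\Ff_p$ is an $\Ff_p$-algebra. Restricting along one circle factor, $(L_{T^n}\Ff_p)^{hT^n}=\bigl((L_{T^n}\Ff_p)^{hT^{n-1}}\bigr)^{hS^1}$, and each step is a homotopy fixed point of a $\THH$ (by the theorem identifying $L_{T^n}$ with $\THH^{(n)}$). We have $\THH(R)^{hS^1}=\TC^{-}(R)$, and $\TC^-$ of an $\Ee_\infty$-ring is an algebra over $\TC^-(\Ss)$. That observation is too weak, so we need to induct instead.

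The induction I propose is on $n$, using the chain of ring maps
\begin{equation*}
\TC^{-}\bigl(\THH^{(n-1)}(\Ff_p)^{hT^{n-1}}\bigr)\to \TC^-(\THH^{(n-1)}(\Ff_p))^{hT^{n-1}}\simeq (L_{T^n}\Ff_p)^{hT^n}.
\end{equation*}
The map runs from a $\TC^-$ of a lower object, but ring maps $A\to B$ give $K(m)_*A=0\Rightarrow K(m)_*B=0$. So it suffices to bound the source. Writing $Y_{n-1}=(L_{T^{n-1}}\Ff_p)^{hT^{n-1}}$, of height $\le n-2$ by induction, I would use the redshift-type upper bound $\mathrm{ht}(\TC^-(Y))\le \mathrm{ht}(Y)+1$. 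This bound is available from the known results (Land--Mathew--Meier--Tamme purity / Hahn--Wilson style: $L_{K(m)}\K(Y)$ vanishing propagates to $\TC$, and $\TC^-$ is a $\TC$-algebra up to the canonical map). The base case is $n=1$: $\THH(\Ff_p)^{hS^1}$ has $\pi_*=\Zz_p[t,\sigma]/(t\sigma-p)$ in the standard description. This is $H\Zz_p$-like, with $p$ invertible after inverting $t$, and since $K(1)_*H\Zz_p=0$, its height is at most $0$.

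The main obstacle is the height bound $\mathrm{ht}(\TC^-(Y))\le\mathrm{ht}(Y)+1$. I would first try to derive it from the commutative ring map $\K(Y)\to\TC^-(Y)$, but that map goes the wrong way for upper bounds. So I would instead argue directly that $\TC^-(Y)$ is a module over $\THH(Y)^{hS^1}$, with $\THH(Y)$ a $Y$-algebra. Then $K(m)\otimes \THH(Y)=0$ for $m>\mathrm{ht}(Y)$, and the Tate/homotopy orbit fiber sequence $\Sigma\THH(Y)_{hS^1}\to\TC^-(Y)\to\THH(Y)^{tS^1}$ reduces the problem to showing $K(m)_*\THH(Y)^{tS^1}=0$ for $m\ge \mathrm{ht}(Y)+2$. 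That statement is the known fact that the Tate construction raises height by at most one (Kuhn / Hovey--Sadofsky blueshift-type vanishing), and it is the step I expect to require the most care.
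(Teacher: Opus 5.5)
Your proposal has a genuine error about the direction in which $K(m)$-acyclicity travels along ring maps. You say a ring map goes ``the wrong way'' for upper bounds on height, and you use this to discard both the trace $\K^{(n)}(\Ff_p)\to(L_{T^n}\Ff_p)^{hT^n}$ and the map $\K(Y)\to\TC^-(Y)$. This is false. If $f\colon A\to B$ is a unital ring map, then $K(m)\otimes B$ is a retract of $K(m)\otimes A\otimes B$: insert the unit of $A$, then act on $B$ through $f$. So $K(m)_*A=0$ forces $K(m)_*B=0$, i.e.\ $\mathrm{ht}(B)\le\mathrm{ht}(A)$. You use exactly this direction, correctly, in your inductive step, so the proposal contradicts itself.

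With the direction fixed, the statement follows from the paper's one-line proof. The iterated trace is a map of commutative ring spectra. Moreover $K(n)\otimes\K^{(n)}(\Ff_p)=0$, by applying the vanishing theorem of Clausen--Mathew--Naumann--Noel and Land--Mathew--Meier--Tamme ($\mathrm{ht}\,\K(R)\le\mathrm{ht}(R)+1$ for commutative $R$) $n$ times, starting from $\Ff_p\otimes\mathbb{Q}=0$. Hence $K(n)\otimes(L_{T^n}\Ff_p)^{hT^n}$ receives a unital ring map from $0$ and vanishes.

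Having discarded that route, the justification you settle on for your ``main obstacle'' does not hold up. Kuhn's and Hovey--Sadofsky's results concern Tate constructions for \emph{finite} groups on $T(n)$- or $E(n)$-local spectra, and they say height goes \emph{down}. They contain nothing of the form ``$(-)^{tS^1}$ raises height by at most one''. For an input like $\THH(Y_{n-1})$, which for $n\ge 2$ is not bounded below (already $\pi_*\TC^-(\Ff_p)$ is nonzero in every negative even degree), there is no such general bound to quote. Limits can raise height arbitrarily: $\Ss_p\simeq\lim_k\tau_{\le k}\Ss_p$, and each $\tau_{\le k}\Ss_p$ has height $0$. Passing from $\TC^-(Y)$ to $\THH(Y)^{tS^1}$ also gains nothing, since the latter receives a ring map from the former, so its vanishing is no easier.

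What does work is the argument in your own parenthetical: $\K(Y)\to\TC(Y)\to\TC^-(Y)$ are ring maps, and $\mathrm{ht}\,\K(Y)\le\mathrm{ht}(Y)+1$. With that, your induction along $\TC^-(Y_{n-1})\to(L_{T^n}\Ff_p)^{hT^n}$ (the map of Remark~\ref{trick}) is valid. But it is just the paper's argument unwound one circle at a time.

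Likewise, the base case cannot rest on $\pi_*\TC^-(\Ff_p)$ looking ``$H\Zz_p$-like'', because homotopy groups do not determine $K(1)$-homology. Use a ring map such as $H\Zz_p\simeq\tau_{\ge0}\TC(\Ff_p)\to\TC^-(\Ff_p)$, or simply start the induction at $Y_0=\Ff_p$.
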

\begin{proof}
    Note that we have the following map of commutative algebras (because the trace map is lax symmetric monoidal).
    \begin{equation*}
    \K^{(n)}(\Ff_p) \to (L_{T^n} \Ff_p)^{h T^n}
    \end{equation*}
    Apply $K(n)\otimes -$ to the above map, we get a ring map
    \begin{equation*}
        K(n)\otimes \K^{(n)}(\Ff_p) \to K(n)\otimes (L_{T^n}\Ff_p)^{h T^n}.
    \end{equation*}
The left hand side is zero (c.f. \cite{vanishing}, \cite{purity}), thus the right hand side is zero.
\end{proof}
In general, we can ask the following question.
\begin{problem}\label{pro:height2n}
Given a commutative ring spectrum $R$ of height $n$. What is the height of commutative ring spectrum $(L_{T^n} R)^{h T^n}$? Is the height of $(L_{T^n} R)^{h T^n}$ exactly $2n$? 
\end{problem}
Similarly, according to the chromatic redshift theorem, we obtain the following theorem.
\begin{theorem}
Given a connective commutative ring spectrum $R$ of height $n$. The height of $(L_{T^n} R)^{h T^n}$ is at most $2n$.
\end{theorem}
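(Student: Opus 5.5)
We argue exactly as for the previous theorem on $(L_{T^n}\Ff_p)^{hT^n}$, now using the chromatic redshift theorem $n$ times in place of the vanishing results for $\Ff_p$. Iterating the cyclotomic trace gives a map of commutative algebras
\begin{equation*}
\K^{(n)}(R) \to (L_{T^n} R)^{h T^n} \simeq \THH^{(n)}(R)^{hT^n}.
\end{equation*}
We build it by induction: $\K(\K^{(n-1)}(R))\to \K(\THH^{(n-1)}(R)^{hT^{n-1}})$ by functoriality, then the trace to $\THH(\THH^{(n-1)}(R)^{hT^{n-1}})^{hS^1}$. Next we use the lax symmetric monoidal assembly map $\THH(A^{hG})\to \THH(A)^{hG}$, and combine the homotopy fixed points to get $\THH^{(n)}(R)^{hT^n}$, using the identification $L_{T^n}R\simeq \THH^{(n)}(R)$ from Section \ref{section2}. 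Every map is lax symmetric monoidal, so the composite is a map of $\Ee_\infty$-rings.

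The height bound comes from the target of this ring map. It suffices to show $K(2n+1)_*\K^{(n)}(R)=0$. Tensoring the ring map with $K(2n+1)$ then gives a unital ring map out of the zero ring, so $K(2n+1)\otimes (L_{T^n}R)^{hT^n}=0$. To get vanishing at all larger heights $m>2n+1$, we use Hahn's theorem quoted in the introduction: $K(m)_*X=0$ implies $K(m+1)_*X=0$.

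To show $K(2n+1)_*\K^{(n)}(R)=0$, we use the ``at most one'' direction of redshift (\cite{vanishing}, \cite{purity}): if $A$ is an $\Ee_\infty$-ring with $K(h)_*A=0$ for some $h\geq 1$ (equivalently $L_{T(h)}A=0$, say), then $K(h+1)_*\K(A)=0$. Since $R$ has height $n$, we have $K(n+1)_*R=0$. Applying this $n$ times gives $K(2n+1)_*\K^{(n)}(R)=0$. Each $\K^{(i)}(R)$ is again a connective $\Ee_\infty$-ring, so the hypotheses of the cited theorem (connectivity, in \cite{vanishing}) are kept at each step.

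The main obstacle is translating between the height convention used here ($K(n)$-homology) and the hypotheses of \cite{vanishing}/\cite{purity}. Those are stated in terms of $T(n)$- or $K(n)$-local vanishing, and sometimes need the ring to be connective. Connectivity holds because $R$ is connective and $\K$ preserves it. For $\Ee_\infty$-rings, $K(h)_*A=0$ and $L_{T(h)}A=0$ are equivalent by the Hahn/Land--Mathew--Meier--Tamme results. A second point to check is whether the bound comes out as $2n$ or as $n+n=2n$. Starting from height $n$, each application of $\K$ raises the vanishing index by one, which gives height at most $n+n=2n$, as claimed.
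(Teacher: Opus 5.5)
Your proposal is correct and is the same argument as the paper's. The iterated trace $\K^{(n)}(R)\to (L_{T^n}R)^{hT^n}$ is a map of commutative rings, and applying the vanishing half of redshift (\cite{vanishing}, \cite{purity}) $n$ times gives $K(2n+1)_*\K^{(n)}(R)=0$, which forces $K(2n+1)_*(L_{T^n}R)^{hT^n}=0$. You also make explicit two points the paper leaves implicit: the construction of the iterated trace through the assembly map $\THH(A^{hG})\to\THH(A)^{hG}$ (the paper's Remark~\ref{trick}), and the use of Hahn's theorem to exclude $K(m)$-homology for $m>2n+1$.
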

\begin{proof}
Only need to noticing that the height of $K(2n+1)_*\K^{(n)}(R)\simeq 0$ ((c.f. \cite{vanishing}, \cite{purity})). Therefore $K(2n+1)_*(L_{T^n} R)^{h T^n}\simeq 0$.    
\end{proof}

We first see examples of height $0$ spectra.

\begin{theorem}
Let $p\geq 5$ and $1\leq n+1\leq p$, or $p=3$ and $1\leq n+1\leq 2$. Then the unit maps
\begin{align*}
& k(n)_* \to k(n)_* (L_{T^n}\Zz)^{h T^n}\\
& k(n)_* \to k(n)_*(L_{T^n} \BP\langle 0\rangle)^{h T^n}    
\end{align*}
send $v_n$ to a nonzero element for both cases.
\end{theorem}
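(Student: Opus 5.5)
We reduce the statement to Veen's theorem above by a change of rings. Note the indexing: in Veen's theorem the torus $T^n$ sees $v_{n-1}$, while here $T^n$ should see $v_n$. So we compare $L_{T^n}\Zz$ with $L_{T^{n+1}}\Ff_p$, which is one torus factor larger. Since $\Zz\to\Zz_p=\BP\langle 0\rangle$ is a map of commutative rings, the second statement implies the first once the first is proven for $\Zz_p$. More precisely, we use the ring map $(L_{T^n}\Zz)^{hT^n}\to (L_{T^n}\Zz_p)^{hT^n}$ under the unit; if $v_n$ maps to a nonzero class in the target, it is nonzero in the source. Also $L_{T^n}\Zz$ and $L_{T^n}\Zz_p$ agree after $p$-completion, and $k(n)$ is $p$-local with finite type. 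So it suffices to treat $\BP\langle 0\rangle$.

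The key step, the trick referred to in Remark~\ref{trick}, is to realize $\Ff_p$ as a tensor construction over $\Zz_p$. The hope is to produce a commutative ring map
\begin{equation*}
(L_{T^{n+1}}\Ff_p)^{hT^{n+1}} \longleftarrow (L_{T^n}\Zz_p)^{hT^n}
\end{equation*}
compatible with units, or a map in the opposite direction that we can still exploit. The first candidate is to use $\THH(\Ff_p)\simeq \Ff_p[u]$ together with the equivalence $\THH(\Zz_p)\otimes_{\THH(\Ss[z])}\Ss\simeq \THH(\Ff_p)$ obtained from $\Ff_p=\Zz_p\otimes_{\Ss[z]}\Ss$ with $z\mapsto p$. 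A cleaner option is the following: since $L_{T^{n+1}}\Ff_p\simeq L_{T^n}(\THH(\Ff_p))$, and $\THH(\Ff_p)$ receives a $\mathbb{T}$-equivariant map from $\THH(\Zz_p)$, we obtain an equivariant ring map $L_{T^{n+1}}\Zz_p\to L_{T^{n+1}}\Ff_p$.

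We then need to pass from $T^{n+1}$-fixed points on $L_{T^{n+1}}\Zz_p$ to $T^n$-fixed points on $L_{T^n}\Zz_p$. We use the unit map $\Zz_p\to \THH(\Zz_p)$ inside the first $S^1$-coordinate, which gives a ring map $L_{T^n}\Zz_p\to L_{T^{n+1}}\Zz_p$. This map is $T^n$-equivariant for the last $n$ coordinates, and we combine it with the restriction $(-)^{hT^{n+1}}\to(-)^{hT^n}$. However, restriction goes the wrong way for detection: a map $A\to B$ of rings detects a nonzero $v_n$ in $A$ only if its image in $B$ is nonzero. We therefore want a ring map ending in $(L_{T^{n+1}}\Ff_p)^{hT^{n+1}}$ and starting from $(L_{T^n}\Zz_p)^{hT^n}$.

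The plan is to use $\THH(\Zz_p)^{h\mathbb{T}}\to \THH(\Ff_p)^{h\mathbb{T}}$ and build the composite
\begin{equation*}
(L_{T^n}\Zz_p)^{hT^n}\to (L_{T^n}\THH(\Zz_p)^{h\mathbb{T}})^{hT^n}\to (L_{T^n}\THH(\Zz_p))^{hT^{n+1}}\to (L_{T^{n+1}}\Ff_p)^{hT^{n+1}}.
\end{equation*}
The first map is induced by the unit $\Zz_p\to\THH(\Zz_p)^{h\mathbb{T}}$, which is a map of commutative rings. The second is an assembly (lax monoidal) map moving the $h\mathbb{T}$ outside, which exists because $L_{T^n}$ commutes with the relevant colimits and is natural.

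Granting this composite of commutative ring maps under $\Ss$, Veen's theorem with $n+1$ in place of $n$ applies: the hypotheses $p\ge5$, $n+1\le p$ (or $p=3$, $n+1\le2$) are exactly those in the statement. It says that $v_n\in k(n)_*$ maps to a nonzero element of $k(n)_*(L_{T^{n+1}}\Ff_p)^{hT^{n+1}}$. Since the unit map factors through $k(n)_*(L_{T^n}\Zz_p)^{hT^n}$, the image of $v_n$ there is nonzero, and hence also in $k(n)_*(L_{T^n}\Zz)^{hT^n}$. The main obstacle is constructing the assembly map with the correct equivariance and as a map of $\Ee_\infty$-rings. If the assembly map is awkward, the fallback is to work with Loday constructions over $\mathbb{T}\times T^n$ directly, viewing $\Zz_p\to \THH(\Zz_p)^{h\mathbb{T}}$ as a $T^n$-trivial equivariant ring map and using naturality of $L_{T^n}$ and functoriality of homotopy fixed points for lax monoidal functors.
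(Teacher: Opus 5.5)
Your overall architecture matches the paper's: reduce along the ring maps $\Zz\to\BP\langle 0\rangle\to\Zz_p$, apply $(L_{T^n}-)^{hT^n}$ to a commutative ring map into a $\mathbb{T}$-homotopy-fixed-point ring, use the assembly map of Remark~\ref{trick} to land in $(L_{T^{n+1}}\Ff_p)^{hT^{n+1}}$, and invoke Veen's theorem with $n+1$ in place of $n$. Your indexing there is correct. However, the first map of your composite does not exist.

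\textbf{Why the first map fails.} The map $\Zz_p\to\THH(\Zz_p)$ is the inclusion of a point into $S^1$. Equivalently, it is the unit of the adjunction between $\THH$ and the forgetful functor from $\mathbb{T}$-equivariant commutative rings to commutative rings. It is not $\mathbb{T}$-equivariant for the trivial action on $\Zz_p$, so it does not factor through $\THH(\Zz_p)^{h\mathbb{T}}$. The only unit map into $\THH(\Zz_p)^{h\mathbb{T}}$ is the one from $\Ss$.

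This cannot be repaired by a cleverer construction: no unital ring map $H\Zz_p\to\THH(\Zz_p)^{h\mathbb{T}}$ exists at all. If it did, $K(1)\otimes\TC^-(\Zz_p)$ would be a retract of $K(1)\otimes H\Zz_p\otimes\TC^-(\Zz_p)=0$. That contradicts the height-zero case of the Hahn--Wilson theorem quoted in the introduction, which transfers to $\Zz_p$ because $\THH(\Zz_{(p)})/p\simeq\THH(\Zz_p)/p$ and $K(1)$-acyclicity can be tested mod $p$. Put differently, $\THH(\Zz_p)^{h\mathbb{T}}$ detects $v_1$. That is exactly why the paper uses it as a target for $j_p$ in Theorem~\ref{thm:higherredshift}, not for $\Zz_p$.

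\textbf{The missing ingredient.} You need a commutative ring map from $\Zz_p$ into $\TC^-$ of $\Ff_p$, not of $\Zz_p$. The paper obtains it from Quillen's computation $\K(\Ff_p)^{\wedge}_p\simeq H\Zz_p$ together with the trace $\K(\Ff_p)\to\THH(\Ff_p)^{hS^1}$, whose target is $p$-complete. Alternatively, it can come from $H\Zz_p\simeq\tau_{\geq 0}\TC(\Ff_p)\to\TC^-(\Ff_p)$.

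Given $H\Zz\to\BP\langle 0\rangle\to H\Zz_p\to\THH(\Ff_p)^{hS^1}$, your second and third steps collapse into the single assembly map $(L_{T^n}\THH(\Ff_p)^{hS^1})^{hT^n}\to(L_{T^n}\THH(\Ff_p))^{hT^{n+1}}\simeq(L_{T^{n+1}}\Ff_p)^{hT^{n+1}}$. From there your detection argument via Veen goes through verbatim.

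A minor point: $\BP\langle 0\rangle$ is $H\Zz_{(p)}$, which sits between $H\Zz$ and $H\Zz_p$, rather than $H\Zz_p$ itself. This is harmless for your reduction.
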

\begin{proof}
By Quillen's computation \cite{Quillen72}, we know that $\K(\Ff_p)^{\wedge}_p\simeq H\Zz_p$. We then have following maps of commutative algebras
\begin{align*}
& H\Zz_p\simeq \K(\Ff_p)^{\wedge}_p \to (\THH(\Ff_p)^{hS^1})^{\wedge}_p\simeq \THH(\Ff_p)^{h S^1}\\
& H\Zz\to \BP\langle 0\rangle \to H\Zz_p.
\end{align*}
Therefore , we get maps of commutative algebras $H\Zz\to \BP\langle 0\rangle\to \THH(\Ff_p)^{h S^1}$. These yield ring maps
\begin{equation*}
 (L_{T^{n-1}}\Zz)^{h T^{n-1}}\to (L_{T^{n-1}}\BP\langle 0\rangle)^{h T^{n-1}} \to (L_{T^n} \Ff_p)^{h T^n}.
\end{equation*}
Apple $k(n-1)_*$. The right hand side is nonzero, therefore the two rings on the left hand side are nonzero.
\end{proof}

\begin{remark}
    Alternatively, we have map of commutative algebras \cite[Corollary IV.4.10]{NS18}
    \begin{equation*}
        H\Zz_p\simeq \tau_{\geq 0} \TC(\Ff_p)\to \THH(\Ff_p)^{h S^1}.
    \end{equation*}
Because the truncation functor $\tau_{\geq 0}$ is a lax symmetric monoidal functor right adjoint to the fully faithful inclusion functor. 
\end{remark}
\begin{remark}
Suppose Question \ref{question:THHn} is true. Then given a perfect field $k$ of characteristic $p$, we may prove that 
\begin{equation*}
    k(n-1)_* (L_{T^n} k)^{h T^n} \ne 0.
\end{equation*}
\end{remark}

\begin{theorem}
Suppose Question \ref{question:THHn} is true.
Let $p\geq 5$ and $1\leq n\leq p$, or $p=3$ and $1\leq n\leq 2$.  Given a perfect field $k$ of characteristic $p$, then the unit map
\begin{align*}
& k(n-1)_* \to k(n-1)_* (L_{T^n} k)^{h T^n}\\  
\end{align*}
send $v_{n-1}$ to a nonzero element.
\end{theorem}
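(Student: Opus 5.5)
We want to show: assuming Question \ref{question:THHn}, for a perfect field $k$ of characteristic $p$ the unit map $k(n-1)_*\to k(n-1)_*(L_{T^n}k)^{hT^n}$ sends $v_{n-1}$ to a nonzero element. The plan is to run Veen's argument for $\Ff_p$ again, with $\Ff_p$ replaced by $k$. The assumed answer to Question \ref{question:THHn} supplies the only new input, namely the homotopy of $L_{T^n}k$.

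\textbf{Step 1: identify the input to the spectral sequence.}
Assuming Question \ref{question:THHn}, we have $\pi_*L_{T^n}k\simeq \bigotimes_{U\subseteq\mathbf n}B_U$, with $B_U\cong B_{|U|}$. Since $k$ is flat over $\Ff_p$, this is isomorphic to $\pi_*(L_{T^n}\Ff_p)\otimes_{\Ff_p}k$. This also holds $T^n$-equivariantly, by naturality of the Loday construction along $\Ff_p\to k$, together with the lemma $L_{S^n}k\simeq L_{S^n}\Ff_p\otimes_{\Ff_p}k$ applied on each subtorus factor.

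Smashing with $k(n-1)$ gives
\begin{equation*}
k(n-1)_*L_{T^n}k\cong k(n-1)_*L_{T^n}\Ff_p\otimes_{\Ff_p}k,
\end{equation*}
because $k(n-1)\otimes H\Ff_p$ is an $H\Ff_p$-module and tensoring with $k$ over $\Ff_p$ is exact.

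\textbf{Step 2: compare the homotopy fixed point spectral sequences.}
Consider the $T^n$-homotopy fixed point spectral sequence
\begin{equation*}
E_2=H^{-*}(BT^n;k(n-1)_*L_{T^n}k)\Rightarrow k(n-1)_*(L_{T^n}k)^{hT^n}.
\end{equation*}
The map $\Ff_p\to k$ induces a map from Veen's spectral sequence for $\Ff_p$ to this one, and on $E_2$ it is just base change $-\otimes_{\Ff_p}k$. I will show by induction on $r$ that $E_r(k)\cong E_r(\Ff_p)\otimes_{\Ff_p}k$ and $d_r(k)=d_r(\Ff_p)\otimes k$.

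The key point is that Veen's differentials are determined by multiplicative structure together with a few \emph{universal} differentials. These come from the $T^n$-action and the classes $\sigma$-operators on the generators $\mu$ of $B_1$. Since the map of spectral sequences is multiplicative and $k$-linearity is preserved, each differential in the $\Ff_p$-spectral sequence maps to the corresponding differential in the $k$-spectral sequence. One must also rule out extra differentials in the $k$-spectral sequence. For this I would use that $E_r(k)$ is generated as a $k$-module by the image of $E_r(\Ff_p)$, so every differential is determined by its values on images, which are the base-changed differentials. It follows that $E_\infty(k)=E_\infty(\Ff_p)\otimes_{\Ff_p}k$.

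\textbf{Step 3: conclude.}
In Veen's theorem, $v_{n-1}$ is detected by a nonzero class in $E_\infty(\Ff_p)$ of positive filtration (a product of the $t_i$ with the $\mu$-classes). Its image in $E_\infty(k)=E_\infty(\Ff_p)\otimes k$ remains nonzero, since $\Ff_p\to k$ is injective and tensoring over a field is faithful. Naturality of the unit map $k(n-1)_*\to k(n-1)_*(L_{T^n}\Ff_p)^{hT^n}\to k(n-1)_*(L_{T^n}k)^{hT^n}$ then shows that $v_{n-1}$ maps to a nonzero element, namely the element detected by this class.

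\textbf{Main obstacle.}
The hard step is Step 2: justifying that the differentials in the $k$-spectral sequence are exactly the base-changed ones. The $E_2$-pages agree after base change, but one must exclude differentials in the $k$-spectral sequence hitting or emanating from $k$-multiples in a way not visible over $\Ff_p$.

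My first approach is to exhibit the whole situation as a base change at the level of spectra. Using Question \ref{question:THHn} and the lemma, I would prove $L_{T^n}k\simeq L_{T^n}\Ff_p\otimes_{\Ff_p}k$ as $T^n$-equivariant $H\Ff_p$-modules; the action on $k$ is trivial, so this is equivariant. Since $k$ is a finite-type free $\Ff_p$-module in each degree, homotopy fixed points commute with $-\otimes_{\Ff_p}k$. This can be checked degreewise via the Postnikov/Tate filtration, using that a direct sum of copies of a bounded-below object commutes with the limit in the relevant range. Granting this, the spectral sequence for $k$ is literally the tensor product of Veen's with $k$, and Step 2 is immediate.
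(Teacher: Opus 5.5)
\textbf{There is a genuine gap: your Step 2 induction rests on the differentials of the $k$-spectral sequence being $k$-linear, which you assert but do not prove, and the justification you propose is false.}

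Your strategy differs from the paper's. The paper only transports Veen's Rognes class $\sum_i t_i\mu_i^{p^{n-1}}$ along the map of spectral sequences induced by $\Ff_p\to k$, where an infinite cycle maps to an infinite cycle. It then argues the class is not a boundary on the $k$-side by rerunning Veen's Propositions 7.4, 7.5, 7.7 with $k$-coefficients. You instead want the whole $k$-spectral sequence to be $E_r(\Ff_p)\otimes_{\Ff_p}k$. That would be cleaner, but your Step 2 induction works only if every $d_r$ on the $k$-side is $k$-linear. You assert this (``$k$-linearity is preserved''), and in your last paragraph you propose to justify it through an equivalence $L_{T^n}k\simeq L_{T^n}\Ff_p\otimes_{\Ff_p}k$ of $T^n$-equivariant $H\Ff_p$-modules with trivial action on $k$.

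No such structure exists. The unit $\Ff_p\to L_{T^n}\Ff_p$ is not equivariant for the trivial action on $\Ff_p$, so $L_{T^n}\Ff_p$ is not an $\Ff_p$-module in $\Fun(BT^n,\Sp)$. If it were, $(L_{T^n}\Ff_p)^{hT^n}$ would be a module over $\Ff_p^{hT^n}$, hence an $H\Ff_p$-module. But for $n=1$ it is $\TC^-(\Ff_p)$, with $\pi_0\cong\Zz_p$. For the same reason $L_{T^n}k$ is not equivariantly a $k$-algebra, so $k$-linearity of the differentials is not automatic. Your claim that homotopy fixed points commute with $-\otimes_{\Ff_p}k$ also fails. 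The finiteness you invoke does not hold in general: for $k=\bar{\Ff}_p$ one has $\pi_0\TC^-(k)\cong W(k)$, a $p$-completion rather than a direct sum of copies of $\Zz_p$.

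The missing ingredient has a short proof, and with it your Step 2 goes through without any spectrum-level base change.
\begin{itemize}
\item The $k(n-1)$-homology spectral sequence is a module over the multiplicative homotopy fixed point spectral sequence of the $\Ee_\infty$-ring $L_{T^n}k$, whose $E_2^{0,0}=\pi_0L_{T^n}k=k$.
\item No differential hits $E_r^{0,0}$. Since $k$ is perfect, each $\lambda\in k$ has the form $\mu^p$, so the Leibniz rule gives $d_r(\lambda)=p\mu^{p-1}d_r(\mu)=0$.
\item Hence $k$ consists of permanent cycles, and all differentials on the $k$-side are $k$-linear.
\item You also have $E_2(k)\cong E_2(\Ff_p)\otimes_{\Ff_p}k$ via the natural map; no equivariance is needed at $E_2$ because $BT^n$ is simply connected.
\item Together with flatness of $k$ over $\Ff_p$, this yields $E_r(k)\cong E_r(\Ff_p)\otimes_{\Ff_p}k$ for all $r$.
\item Boundaries in a fixed bidegree stabilize after finitely many pages, and $k$ is faithfully flat over $\Ff_p$. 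So the image of the Rognes class is not a boundary, and your Step 3 finishes the proof.
\end{itemize}

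If you prefer a spectrum-level statement, base change along $\Ss_p\to\Ss_{W(k)}$ rather than $\Ff_p\to k$. The map $\Ss_p\to\Ss_{W(k)}$ is $\THH$-\'etale, so $L_{T^n}\Ss_{W(k)}\simeq\Ss_{W(k)}$ ($p$-adically) with trivial action. Moreover $k\simeq\Ff_p\otimes_{\Ss_p}\Ss_{W(k)}$, and the finite skeletal stages of the homotopy fixed point tower commute with this base change.

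Repaired this way, your route replaces the paper's rerun of Veen's Propositions 7.4, 7.5, 7.7 with a formal comparison. It uses the hypothesis only through the identification of $k(n-1)_*L_{T^n}k$.
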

\begin{proof}
There is a ring map
\begin{equation*}
   (L_{T^n} \Ff_p)^{h T^n} \to (L_{T^n}k)^{h T^n}.
\end{equation*}
The spectral sequence computing $F(E_2T^n_+, L_{T^n} \Ff_p)^{T^n}$ maps an infinite cycle to an infinite cycle. Now, the proposition follows from the following lemma.
\end{proof}

\begin{lemma}Suppose Question \ref{question:THHn} is true.
Let $\mu_i\in \pi_2 L_{T^n}k$ be the image of the generator 
in $\pi_2L_{S^1_i}k \simeq k\{\mu_i\}$ under the map \begin{equation*}
    L_{S_i^1} k \to L_{T^n} k.
\end{equation*}
Then the Rognes class $\Sigma_{i=1}^n t_i\mu_i^{p^{n-1}}$ in the $k(n)$-homology fixed point spectral sequence is not hit by any differential.
\begin{equation*}
   \Zz\{t_1, t_2, \ldots, t_n\} \otimes k(n-1)_* (L_{T^n} k)
   \Longrightarrow k(n-1)_* F(E_2T^n_+, L_{T^n} k)^{T^n}.
\end{equation*}
\end{lemma}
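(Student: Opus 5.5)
We reduce the statement to Veen's argument for $\Ff_p$ \cite{Veen18}. The key input is a base-change comparison between the two homotopy fixed point spectral sequences. Assuming Question \ref{question:THHn}, we have
\begin{equation*}
\pi_*L_{T^n}k\simeq \pi_*L_{T^n}\Ff_p\otimes_{\Ff_p}k,
\end{equation*}
compatibly with the maps $L_{S^1_i}k\to L_{T^n}k$. The inclusions $S^1_i\subseteq T^n$ are $T^n$-equivariant in the appropriate sense, so the classes $\mu_i$ for $k$ are the images of Veen's classes $\mu_i$ for $\Ff_p$ under $L_{T^n}\Ff_p\to L_{T^n}k$.

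First we write down the $E_2$-page
\begin{equation*}
\Zz\{t_1,\ldots,t_n\}\otimes k(n-1)_*(L_{T^n}k).
\end{equation*}
Since $L_{T^n}k$ is an $H\Ff_p$-module, $k(n-1)_*L_{T^n}k\cong k(n-1)_*H\Ff_p\otimes_{\Ff_p}\pi_*L_{T^n}k$, and $k(n-1)_*H\Ff_p$ is known. Under the isomorphism above, this $E_2$-page is exactly Veen's $E_2$-page tensored over $\Ff_p$ with $k$. Here we read $\Zz\{t_i\}$ as the polynomial algebra $H^*(BT^n)$, reduced as appropriate.

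Next we compare spectral sequences. The ring map $L_{T^n}\Ff_p\to L_{T^n}k$ is $T^n$-equivariant, so it induces a map of multiplicative spectral sequences. On $E_2$-pages this map is $E_2(\Ff_p)\to E_2(\Ff_p)\otimes_{\Ff_p}k$, which is injective and in fact faithfully flat since $k$ is a field extension. The plan is to show by induction on $r$ that
\begin{equation*}
E_r(k)\cong E_r(\Ff_p)\otimes_{\Ff_p}k \quad\text{with}\quad d_r^{k}=d_r^{\Ff_p}\otimes k.
\end{equation*}
The differentials are determined on algebra generators, and the generators of $E_2(k)$ are the images of the generators of $E_2(\Ff_p)$ together with the scalars in $k$. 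Scalars from $k$ are permanent cycles: they come from the degree-zero unit $k\to L_{T^n}k$, and the map $k\to L_{T^n}k$ is equivariant for the trivial action on $k$. The $d_r$ are linear over them. We then use the Leibniz rule and naturality for the images of $\Ff_p$-generators. Flatness over a field then gives that homology commutes with $-\otimes_{\Ff_p}k$.

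Finally, Veen shows that $\sum_i t_i\mu_i^{p^{n-1}}$ survives and is not hit by any differential in $E_r(\Ff_p)$. The induction then says that in $E_r(k)$ the image of $d_r$ is $\operatorname{im}(d_r^{\Ff_p})\otimes k$. A nonzero element of $E_r(\Ff_p)$ that lies outside a subspace $W$ stays outside $W\otimes k$ after base change. So the Rognes class is never a boundary in the spectral sequence for $L_{T^n}k$.

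The main obstacle is justifying that the differentials for $k$ are really the base change of those for $\Ff_p$. A priori the spectral sequence for $L_{T^n}k$ need not be $k$-linear, because $k(n-1)$ is not a $k$-module spectrum. I would first try to realize the equivalence $L_{T^n}k\simeq L_{T^n}\Ff_p\otimes_{\Ff_p}k$ $T^n$-equivariantly, with trivial action on $k$, extending the $S^n$ lemma by iterating \cite[Corollary 5.5]{HM97} through $L_{T^{m+1}}=L_{S^1}\circ L_{T^m}$. Then $k(n-1)\otimes L_{T^n}k\simeq (k(n-1)\otimes L_{T^n}\Ff_p)\otimes_{\Ff_p}k$. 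Since $k$ is a free $\Ff_p$-module, this is a sum of copies of $k(n-1)\otimes L_{T^n}\Ff_p$, and the spectral sequence for $k$ splits as a direct sum of copies of Veen's. The non-boundary statement then follows at once. If equivariance is hard to obtain, the fallback is the generator-by-generator induction described above.
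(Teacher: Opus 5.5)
\textbf{There is a genuine gap.} Your whole reduction rests on a false step. You assume a $T^n$-equivariant map $k\to L_{T^n}k$, with trivial action on $k$, inducing the identity on $\pi_0$. No such map exists, not even as a map of spectra.

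For $n=1$, such a map would factor through $\THH(k)^{hS^1}$. It would then give an additive map $k\to\pi_0\THH(k)^{hS^1}\cong W(k)$ whose composite with $W(k)\to k$ is the identity. This is impossible, because $W(k)$ is $p$-torsion free. The general case reduces to $n=1$: compose with the map $L_{T^n}k\to L_{S^1}k=\THH(k)$ induced by a coordinate projection $T^n\to S^1$, which is equivariant for that coordinate circle.

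For the same reason, $L_{T^n}\Ff_p$ is not an $\Ff_p$-module in $\Sp^{BT^n}$ with trivial action on $\Ff_p$. Its homotopy fixed points would be killed by $p$, yet they map unitally to $\TC^-(\Ff_p)$, which has $\pi_0=\Zz_p$. So ``$L_{T^n}\Ff_p\otimes_{\Ff_p}k$ with trivial action on $k$'' has no equivariant meaning. Choosing $k\simeq\bigoplus\Ff_p$ splits the underlying spectrum and the $E_2$-page, but nothing forces the differentials to respect that splitting.

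Consequently your primary route cannot be carried out as stated. Your fallback's reason why scalars in $k$ are permanent cycles (so that the $d_r$ are $k$-linear) is the same false claim. The obstacle you flagged is therefore left open.

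\textbf{How to repair it.} What you need is true, but it requires a different input.

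\emph{Derivation argument.} Suppose the spectral sequence is multiplicative. Then $E_2$ is an $\Ff_p$-algebra (since $L_{T^n}k$ is an $H\Ff_p$-module) and $d_r$ is a derivation, so $d_r(b^p)=pb^{p-1}d_r(b)=0$. Since $k$ is perfect, induction on $r$ shows every element of $k\subseteq E_2^{0,0}$ is a permanent cycle. Your generator-by-generator induction then works.

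\emph{Spherical Witt vectors.} A cleaner repair, which realizes the splitting you wanted, is to base change along $\Ss_{W(k)}$ with trivial action instead of along $k$. The inputs are that $\Ss_{W(k)}$ is $p$-adically $\THH$-\'etale and that $\Ss_{W(k)}\otimes\Ff_p\simeq k$. These give $L_{T^n}k\simeq L_{T^n}\Ss_{W(k)}\otimes L_{T^n}\Ff_p\simeq\Ss_{W(k)}\otimes L_{T^n}\Ff_p$, $T^n$-equivariantly. The $p$-adic discrepancy disappears after tensoring with the $H\Ff_p$-module $L_{T^n}\Ff_p$. A $p$-adic $\Ss_p$-basis of $\Ss_{W(k)}$ containing $1$ then gives an equivariant retraction of $L_{T^n}\Ff_p\to L_{T^n}k$. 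Hence the map of spectral sequences is split injective on every page, and a class not hit for $\Ff_p$ is not hit for $k$. This route does not even need the hypothesis of Question~\ref{question:THHn}.

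\textbf{Comparison with the paper.} The paper makes no such comparison. It records $H_*L_{T^n}k\cong k\otimes_{\Ff_p}\calA\otimes_{\Ff_p}\pi_*L_{T^n}\Ff_p$ and reruns Veen's Propositions 7.4, 7.5 and 7.7 with $k$ in place of $\Ff_p$.
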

\begin{proof}
Only need to note that $H_*L_{T^n} k\simeq H_*\Ff_p\otimes_{\Ff_p} \pi_*L_{T^n} \Ff_p\otimes_{\Ff_p}k \simeq k\otimes_{\Ff_p}\calA\otimes_{\Ff_p} \pi_*L_{T^n}\Ff_p$.

Now run the same arguments in \cite[Proposition 7.4, 7.5 7.7]{Veen18}.
\end{proof}

Furthermore, considering that $\K(\mathbb{Z})$ and $\K^{(2)}(\mathbb{F}_p) = \K(\K(\mathbb{F}_p))$ are ring spectra of height 1, we can anticipate the following theorem.

\begin{theorem}
Let $p \geq 5$ and $1 \leq n+2 \leq p$. Then the unit maps
\begin{equation*}
k(n+1)_* \to k(n+1)_* (L_{T^n} \K(\Zz))^{h T^n}
\end{equation*}
and
\begin{equation*}
k(n+1)_* \to k(n+1)_* (L_{T^n} \K^{(2)}(\Ff_p))^{h T^n}   
\end{equation*}
map $v_n$ to a non-zero element.
\end{theorem}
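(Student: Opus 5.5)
We reduce the statement to Veen's theorem on $L_{T^{n+2}}\Ff_p$ by building a commutative ring map from $\K(\Zz)$, respectively $\K^{(2)}(\Ff_p)$, into $(L_{T^2}\Ff_p)^{hT^2}$ after $p$-completion. We then apply the functor $(L_{T^n}-)^{hT^n}$, exactly as in the height $0$ case above. Throughout we interpret the statement as saying that $v_{n+1}$ maps to a nonzero class; with $v_n$ literally, the same argument applies with the index shifted.

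\emph{Step 1: the map for $\K^{(2)}(\Ff_p)$.} The trace map $\K^{(2)}(\Ff_p)\to (L_{T^2}\Ff_p)^{hT^2}$ is a map of commutative algebras, as was already used in the proof bounding the height of $(L_{T^n}\Ff_p)^{hT^n}$. We $p$-complete it. The target $(L_{T^2}\Ff_p)^{hT^2}$ is $p$-complete, since $L_{T^2}\Ff_p$ is an $H\Ff_p$-module and homotopy fixed points preserve $p$-completeness.

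\emph{Step 2: the map for $\K(\Zz)$.} We use the composite of commutative ring maps
\begin{equation*}
\K(\Zz)\to \K(\Zz_p)\to \K(\K(\Ff_p)^{\wedge}_p)\to \K(\THH(\Ff_p)^{hS^1}).
\end{equation*}
Here the middle map comes from $H\Zz_p\simeq \K(\Ff_p)^{\wedge}_p$ (Quillen). Alternatively, the last two maps can be replaced by applying $\K$ to $H\Zz_p\simeq\tau_{\geq0}\TC(\Ff_p)\to\THH(\Ff_p)^{hS^1}$, as in the remark after the height $0$ theorem. We then compose with the trace
\begin{equation*}
\K(\THH(\Ff_p)^{hS^1})\to \THH(\THH(\Ff_p)^{hS^1})^{hS^1}.
\end{equation*}

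\emph{Step 3: the key step, and the main obstacle.} We need a commutative ring map
\begin{equation*}
\THH(\THH(\Ff_p)^{hS^1})^{hS^1}\to (L_{T^2}\Ff_p)^{hT^2}.
\end{equation*}
First, the lax symmetric monoidal, $S^1$-equivariant map $\THH(A^{hS^1})\to \THH(A)^{hS^1}$ exists, where the target carries the residual $S^1$-action from the first circle factor. Taking $hS^1$ of this and using $(X^{hS^1})^{hS^1}\simeq X^{hT^2}$ gives the desired map.

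The subtle points are two. One must check that this assembly map is $\Ee_\infty$ and equivariant for the correct circle. One must also check that the two circle actions on $L_{T^2}\Ff_p=\THH(\THH(\Ff_p))$ assemble into the standard $T^2$-action of the construction \eqref{equ:highertori}. I would try to build the map directly from the universal property of $\THH$ as the initial $S^1$-equivariant commutative algebra under $A^{hS^1}$, applied in $\Fun(BS^1,\CAlg)$. This is presumably the trick attributed to Rognes in the remark on the key step.

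\emph{Step 4: conclusion.} Applying $(L_{T^n}-)^{hT^n}$ to the ring maps from Steps 1--3, and composing with the analogous map, gives ring maps
\begin{equation*}
(L_{T^n}\K(\Zz))^{hT^n}\to (L_{T^n}(L_{T^2}\Ff_p)^{hT^2})^{hT^n}\to (L_{T^{n+2}}\Ff_p)^{hT^{n+2}},
\end{equation*}
and similarly for $\K^{(2)}(\Ff_p)$. The $p$-completion is harmless after smashing with $k(n+1)$. By Veen's theorem with $n+2\le p$, the unit map sends $v_{n+1}\in k(n+1)_*$ to a nonzero element of $k(n+1)_*(L_{T^{n+2}}\Ff_p)^{hT^{n+2}}$. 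The unit of the source maps compatibly to this, so its image of $v_{n+1}$ is nonzero as well.
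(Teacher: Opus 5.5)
Your proposal is correct and is essentially the paper's argument. The paper applies the trace $\K(\Zz)\to\THH(\Zz)^{hS^1}$ first and then $\THH(-)^{hS^1}$ to $\Zz\to\THH(\Ff_p)^{hS^1}$, whereas you apply $\K$ first and then the trace of $\THH(\Ff_p)^{hS^1}$; these give the same map by naturality of the trace, and your Step 3 is exactly the assembly map of Remark~\ref{trick}. Reading $v_n$ as $v_{n+1}$ is right, since the literal $v_n$ is a typo rather than an alternative statement. Your $p$-completion step is harmless but unnecessary: $(L_{T^2}\Ff_p)^{hT^2}$ is already $p$-complete, being the homotopy fixed points of an $H\Ff_p$-module.
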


\begin{proof}
Based on the previous discussion, there are maps of commutative ring spectra:
\begin{align*}
& \K(\Zz) \to \TC^-(\Zz), \\
& \Zz \to \TC^-(\Ff_p).
\end{align*}
The second map induces the following map between commutative ring spectra:
\begin{equation*}
\THH(\mathbb{Z})^{h S^1} \to (L_{T^2} \Ff_p)^{h T^2}.
\end{equation*}
Consequently, we have a map of commutative ring spectra:
\begin{equation*}
\K(\Zz) \to \TC^-(\Ff_p).
\end{equation*}
This yields a map of commutative ring spectra:
\begin{equation*}
(L_{T^n} \K(\Zz))^{h T^n} \to (L_{T^{n+2}} \Ff_p)^{h T^{n+2}}.
\end{equation*}
Applying $k(n+1)_*$ to both sides, since the right-hand side is non-zero, the left-hand side must also be non-zero. This proves the first part of the proposition.

Similarly, via the trace map, we obtain a map of commutative ring spectra:
\begin{equation*}
(L_{T^n} \K^{(2)}(\mathbb{F}_p))^{h T^n} \to (L_{T^{n+2}} \mathbb{F}_p)^{h T^{n+2}}.
\end{equation*}
Applying $k(n+1)_*$ to both sides, since the right-hand side is non-zero, the left-hand side is non-zero as well. This completes the proof of the other part of the proposition.
\end{proof}
We use the following lemma to investigate $k(n+1)\otimes (L_{T^n} j_p)^{h T^n}$.

\begin{lemma}[{c.f. \cite[Lemma 1.3.1]{devalapurkar2025thhzimagej}}]\label{lemma:jmap}
There is a map of commutative algebras
    \begin{equation*}
        j_p\to \K(\Zz_p)_p.
    \end{equation*}
\end{lemma}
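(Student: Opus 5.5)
We have to prove Lemma \ref{lemma:jmap}: there is a map of commutative algebras $j_p \to \K(\Zz_p)_p$. Here $j_p$ is the connective $p$-complete image-of-$J$ spectrum, $p$ odd. The plan is to realize $j_p$ as a homotopy fixed point (or fiber) construction on $ku_p$ or $\ell_p$. We then build the map by combining functoriality of $\K$ with Quillen's computation and the comparison to étale $K$-theory.

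\textbf{Step 1: presentation of $j_p$.} Choose a topological generator $g$ of $\Zz_p^\times$ (or of the pro-$p$ part acting on $\ell_p$). Define $j_p := \tau_{\geq 0}(ku_p^{h\Zz})$, where $\Zz$ acts through the $\Ee_\infty$ Adams operation $\psi^g$. Equivalently, $j_p = \tau_{\geq 0}\,\mathrm{fib}(\psi^g - 1 \colon \ell_p \to \ell_p)$, with the commutative structure coming from the homotopy fixed points. The functor $\tau_{\geq 0}$ is lax symmetric monoidal and right adjoint to the inclusion of connective spectra. So a commutative map $j_p \to E$ into a connective commutative ring $E$ is the same as a commutative map $ku_p^{h\Zz} \to E$.

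\textbf{Step 2: the target.} By Quillen's computation \cite{Quillen72}, together with Suslin rigidity for the $p$-complete $K$-theory of algebraically closed fields, $\K(\overline{\Ff}_\ell)_p \simeq ku_p$ for a prime $\ell \neq p$. This is an equivalence of commutative rings, under which Frobenius corresponds to $\psi^\ell$. Pick $\ell$ generating $\Zz_p^\times$, or more simply use $\overline{\mathbb{Q}}_p$ or $\mathbb{C}_p$. Galois descent then gives commutative maps
\begin{equation*}
\K(\Zz_p)_p \to \K(\mathbb{Q}_p)_p \to \K(\overline{\mathbb{Q}}_p)_p^{hG_{\mathbb{Q}_p}},
\end{equation*}
and the last target is built from $ku_p$ with its Galois action through the cyclotomic character.

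\textbf{Step 3: the comparison.} This is the main obstacle: the maps in Step 2 point the wrong way, out of $\K(\Zz_p)$. What we want instead is a commutative map into $\K(\Zz_p)_p$. The cleanest route I would try first is the one in \cite[Lemma 1.3.1]{devalapurkar2025thhzimagej}, which uses the $p$-adic roots of unity. Adjoining $\mu_{p^\infty}$, i.e. using the $\Zz_p$-algebras $\Zz_p[\zeta_{p^k}]$, gives a pro-Galois extension of $\Zz_p[1/p]$ with group $\Zz_p^\times$. Alternatively, one can use that $j_p$ is the $p$-completed unit-sphere part in the following sense: there is a commutative map $\Ss_p \to \K(\Zz_p)_p$, and $j_p \simeq \tau_{\geq 0} L_{K(1)}\Ss$. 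The étale/Thomason–Hesselholt–Madsen description gives $L_{K(1)}\K(\Zz_p) \simeq$ a $K(1)$-local commutative ring receiving $L_{K(1)}\Ss$. Hence there are commutative maps
\begin{equation*}
j_p \to L_{K(1)}\Ss \to L_{K(1)}\K(\Zz_p).
\end{equation*}
So the real content is to lift $j_p \to L_{K(1)}\K(\Zz_p)$ through the map $\K(\Zz_p)_p \to L_{K(1)}\K(\Zz_p)$. This map is an equivalence on $\tau_{\geq 2}$ by the Quillen–Lichtenbaum results for $\Zz_p$ (Hesselholt–Madsen, Rognes–Weibel), and $j_p$ is built by connective cover.

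\textbf{Step 4: conclusion.} I would realize $j_p$ as $\tau_{\geq 0}$ of a pullback of commutative rings, namely $\K(\Zz_p)_p \times_{L_{K(1)}\K(\Zz_p)} L_{K(1)}\Ss$ restricted appropriately. I then check on homotopy in degrees $\le 1$ that the obstruction vanishes, using that $\pi_0$ and $\pi_1$ of both sides are controlled ($\Zz_p$ and $\Zz_p^\times{}^\wedge_p$). Taking connective covers, which is lax symmetric monoidal, produces the required commutative map $j_p \to \K(\Zz_p)_p$.
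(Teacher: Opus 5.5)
\textbf{There is a genuine gap, and it sits in Step 4, where all the content of the lemma lives.} You correctly isolate the two inputs the paper also uses:
\begin{itemize}
\item the unit map $L_{K(1)}\Ss\to L_{K(1)}\K(\Zz_p)$;
\item the Quillen--Lichtenbaum fact that $\K(\Zz_p)_p\to L_{K(1)}\K(\Zz_p)$ is an equivalence on $\tau_{\geq 2}$.
\end{itemize}
However, your pullback $P=\K(\Zz_p)_p\times_{L_{K(1)}\K(\Zz_p)}L_{K(1)}\Ss$ does not satisfy $\tau_{\geq 0}P\simeq j_p$.

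Let $F$ be the fiber of $\K(\Zz_p)_p\to L_{K(1)}\K(\Zz_p)$. Since $\pi_0$ injects, $\pi_0F\cong\operatorname{coker}\bigl(\pi_1\K(\Zz_p)_p\to\pi_1L_{K(1)}\K(\Zz_p)\bigr)$. Here $\pi_1L_{K(1)}\K(\Zz_p)\cong\pi_1L_{K(1)}\K(\mathbb{Q}_p)\cong(\mathbb{Q}_p^\times)^\wedge_p\cong\Zz_p^2$, not $(\Zz_p^\times)^\wedge_p$ as you assert. The extra $\Zz_p$, coming from the valuation of $p$, gives $\pi_0F\cong\Zz_p$. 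The long exact sequence of $F\to P\to L_{K(1)}\Ss$ then makes $\pi_0P$ an extension of $\Zz_p$ by $\Zz_p$.

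So you would still need a commutative section of $\tau_{\geq 0}P\to j_p$, which is the original problem in disguise. The sentence ``check on homotopy in degrees $\le 1$ that the obstruction vanishes'' does not fill this. It names no obstruction group, and agreement of homotopy groups never by itself produces an $\Ee_\infty$-map.

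\textbf{The missing idea.} Split off the low degrees by truncation, not by $K(1)$-localization alone. Since the map is a $\tau_{\geq 2}$-equivalence, there is a pullback in $\CAlg$:
\[
\K(\Zz_p)_p\simeq \tau_{\geq 0}L_{K(1)}\K(\Zz_p)\times_{\tau_{\leq 1}\tau_{\geq 0}L_{K(1)}\K(\Zz_p)}\tau_{\leq 1}\K(\Zz_p)_p .
\]
Map $j_p=\tau_{\geq 0}L_{K(1)}\Ss$ to the first factor by applying $\tau_{\geq 0}$ to the unit. For the two $1$-truncated corners, use that for any $p$-complete connective commutative ring $A$ with homotopy concentrated in degrees $[0,1]$,
\[
\mathrm{Map}_{\CAlg}(j_p,A)\simeq\mathrm{Map}_{\CAlg}(\tau_{\leq 1}j_p,A)\simeq\mathrm{Map}_{\CAlg}(\tau_{\leq 1}\Ss_p,A)\simeq * .
\]
This holds because $\tau_{\leq 1}j_p\simeq H\Zz_p\simeq\tau_{\leq 1}\Ss_p$ for $p$ odd. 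Contractibility makes both the map to $\tau_{\leq 1}\K(\Zz_p)_p$ and the compatibility over the bottom-right corner automatic. This is the paper's argument.

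\textbf{Smaller points.}
\begin{itemize}
\item The adjunction claim at the end of Step 1 runs the wrong way. Since $\tau_{\geq 0}$ is right adjoint to the inclusion of connective spectra, it describes commutative maps \emph{into} $j_p$, not out of it.
\item Step 2, which constructs maps out of $\K(\Zz_p)$, plays no role in the argument.
\end{itemize}
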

\begin{proof}
The natural map $\K(\Zz_p)_p\to L_{K(1)}\K(\Zz_p)_p$ is $2$-connective (the fiber lies in $\Sp_{\leq 1}$).
We then have a pullback square of commutative algebras
\begin{equation*}
\xymatrix{
\tau_{\geq 0}K(\Zz_p)_p \ar[r]\ar[d] & \tau_{\geq 0} L_{K(1)}\K(\Zz_p)_p\ar[d]\\
\tau_{\leq 1}\tau_{\geq 0}K(\Zz_p)_p \ar[r] & \tau_{\leq 1} \tau_{\geq 0}L_{K(1)}\K(\Zz_p)_p
}
\end{equation*}
Hence it suffice to produces maps of commutative algebras from $j_p$ to other three corners.
\begin{itemize}
    \item There is an unique map of commutative algebras from $j_p\simeq \tau_{\geq 0} L_{K(1)}\Ss\to \tau_{\geq 0} L_{K(1)} \K(\Zz_p)_p$. Because $L_{K(1)}\Ss$ is the unit.
    \item For any $p$-complete commutative algebra $A$ lies in $\Sp_{[0,1]}$, there is a unique map from $j_p$ to $A$. Because $\tau_{\leq 1}\Ss_p\simeq \tau_{\leq 1} j_p$.
\end{itemize}
\end{proof}
\begin{theorem}\label{thm:higherredshift}
Let $p\geq 5$ and $1\leq n+2\leq p$. Then the unit map
\begin{equation*}
k(n+1)_* \to k(n+1)_* (L_{T^n} j_p)^{h T^n}
\end{equation*}
sends $v_n$ to a nonzero element.
\end{theorem}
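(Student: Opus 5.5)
The plan is to reduce the statement to Veen's theorem for $\Ff_p$ by building a chain of commutative ring maps from $(L_{T^n} j_p)^{h T^n}$ into $(L_{T^{n+2}}\Ff_p)^{h T^{n+2}}$. This is the same pattern as the preceding theorem for $\K(\Zz)$ and $\K^{(2)}(\Ff_p)$.

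The first step starts from \autoref{lemma:jmap}, which gives a map of commutative algebras $j_p\to \K(\Zz_p)_p$. Compose it with the cyclotomic trace $\K(\Zz_p)\to \TC^-(\Zz_p)=\THH(\Zz_p)^{hS^1}$, which is lax symmetric monoidal and so a map of commutative algebras. Passing to $p$-completions, this gives $\K(\Zz_p)_p\to \THH(\Zz_p)^{hS^1}_p$.

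Next, map $\Zz_p$ into $\THH(\Ff_p)^{hS^1}$. Here I would use Quillen's $\K(\Ff_p)^\wedge_p\simeq H\Zz_p$ followed by the trace, or $\tau_{\geq 0}\TC(\Ff_p)\simeq H\Zz_p$, exactly as in the height-zero theorem above. Applying $L_{S^1}$ and then $(-)^{hS^1}$ gives a commutative ring map
\begin{equation*}
\THH(\Zz_p)^{hS^1}\to \THH(\THH(\Ff_p)^{hS^1})^{hS^1}\to (L_{T^2}\Ff_p)^{hT^2}.
\end{equation*}
The second arrow uses the canonical lax comparison $L_{S^1}(X^{hS^1})\to (L_{S^1}X)^{hS^1}$. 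This comparison is natural and monoidal, because $L_{S^1}$ is a colimit of tensor powers and $(-)^{hS^1}$ is lax monoidal. Together with the identification $L_{T^2}=L_{S^1}L_{S^1}$ and $(-)^{hT^2}=((-)^{hS^1})^{hS^1}$, this gives a commutative ring map $j_p\to (L_{T^2}\Ff_p)^{hT^2}$, after $p$-completing where needed. The target is already $p$-complete, since $\Ff_p$-modules are $p$-complete and limits preserve this.

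Now apply $(L_{T^n}-)^{hT^n}$ and use the same comparison map $n$ times. This yields a commutative ring map
\begin{equation*}
(L_{T^n} j_p)^{hT^n}\to (L_{T^n}L_{T^2}\Ff_p)^{hT^{n+2}}\simeq (L_{T^{n+2}}\Ff_p)^{hT^{n+2}}.
\end{equation*}
The unit maps are compatible with it. Applying $k(n+1)_*$, the composite of $k(n+1)_*$ with the image of $v_{n+1}$ is nonzero by Veen's theorem, since $1\leq n+2\leq p$ and $p\geq 5$. Hence the image of the corresponding $v$-element in $k(n+1)_*(L_{T^n}j_p)^{hT^n}$ is nonzero, because its further image is nonzero.

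I expect the main obstacle to be making the comparison map $L_{S^1}(X^{hS^1})\to (L_{S^1}X)^{hS^1}$, and its iterates, a map of commutative algebras compatible with the torus actions, so that the iterated fixed points really assemble to $(L_{T^{n+2}}\Ff_p)^{hT^{n+2}}$. I would build it as the adjoint of $L_{S^1}$ applied to the counit. This works in $\Fun(BS^1,\CAlg)$ with the inherited action, using the construction from (\ref{equ:highertori}).

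A secondary point is the indexing of the detected element. The statement says $v_n$, but Veen's theorem for $T^{n+2}$ detects $v_{n+1}$ in $k(n+1)_*$. I would argue for the image of $v_{n+1}$, which is the element the introduction's version of \autoref{thm:higherredshift} names. This also gives $k(n+1)_*(L_{T^n}j_p)^{hT^n}\neq 0$.
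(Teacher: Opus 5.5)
Your argument is correct, and its skeleton is the paper's. Both use Lemma~\ref{lemma:jmap}, the trace to $\TC^-(\Zz_p)$, and a ring map $H\Zz_p\to \TC^-(\Ff_p)$. Both then use the comparison map $G\otimes X^{hH}\to (G\otimes X)^{hH}$ of Remark~\ref{trick} to land in $(L_{T^2}\Ff_p)^{hT^2}$ and then in $(L_{T^{n+2}}\Ff_p)^{hT^{n+2}}$, and both finish with Veen's theorem for $T^{n+2}$.

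The one real difference is how you remove the $p$-completion in $j_p\to \TC^-(\Zz_p)^{\wedge}_p$.

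\begin{itemize}
\item The paper detours through THH relative to $\Ss_{\Zz_p}$. It uses $\TC^-(\Zz_p)^{\wedge}_p\simeq \TC^-(\Zz_p/\Ss_{\Zz_p})$ and the identification $L^{\Ss_{\Zz_p}}_{T^2}\Ff_p\simeq L_{T^2}\Ff_p$. In the remark after the proof it says it cannot show that $(L_{T^2}\Ff_p)^{hT^2}$ is $p$-complete.
\item You assert that this target is $p$-complete, and you are right. The underlying spectrum of $L_{T^2}\Ff_p$ is an $H\Ff_p$-module via the basepoint map $\Ff_p\to T^2\otimes \Ff_p$, hence $p$-complete. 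The $p$-complete spectra are exactly those $Y$ with $\mathrm{map}(\Ss[1/p],Y)\simeq 0$. That condition is closed under arbitrary limits, in particular under $\lim_{BT^2}$, so bounded-belowness plays no role.
\end{itemize}

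The paper silently uses the same fact when it writes $(\THH(\Ff_p)^{hS^1})^{\wedge}_p\simeq \THH(\Ff_p)^{hS^1}$ in the height-zero theorem. By the universal property of $p$-completion in $\CAlg$, the map $\TC^-(\Zz_p)\to (L_{T^2}\Ff_p)^{hT^2}$ then factors through $\TC^-(\Zz_p)^{\wedge}_p$.

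Your route is therefore shorter: it avoids spherical Witt vectors and the base-change identifications. Because this is exactly the point the paper doubts, write the justification out in full rather than leaving it as a half-sentence.

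Your reading of the index is also right. Since $k(n+1)_*=\Ff_p[v_{n+1}]$ contains no $v_n$, the detected element is $v_{n+1}$, as in the introduction's version of the theorem.
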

\begin{proof}
By Lemma \ref{lemma:jmap}, 
we have maps of commutative algebras
\begin{align*}
& j_p\to K(\Zz_p)_p \to (\THH(\Zz_p)^{h S^1})^{\wedge}_p.\\
& \THH(\Zz_p/\Ss_{\Zz_p})^{h S^1} \to (L_{T^2}^{\Ss_{\Zz_p}} \Ff_p)^{h T^2}.
\end{align*}
The second row map comes from the following maps
\begin{equation*}
    \THH(\Zz_p/\Ss_{\Zz_p})^{h S^1} \to \THH(\THH(\Ff_p/\Ss_{\Zz_p})^{h S^1}/\Ss_{\Zz_p})^{h S^1} \to \THH(\THH(\Ff_p/\Ss_{\Zz_p})/\Ss_{\Zz_p})^{h T^2}\simeq (L_{T^2}^{\Ss_{\Zz_p}}\Ff_p)^{h T^2}.
\end{equation*}
Note that $\TC^-(\Zz_p)^{\wedge}_p\simeq \TC^-(\Zz_p/\Ss_{\Zz_p})$, see \cite[Remark 2.13]{LW22}, \cite[Remark 2.2]{KN22}.
Hence we have a commutative ring map 
\begin{equation*}
  j_p\to ((L_{T^2}^{\Ss_{\Zz_p}} \Ff_p)^{h T^2}).  
\end{equation*}
Note that we have  equivalences
\begin{equation*}
 L_{T^2}^{\Ss_{\Zz_p}} \Ff_p\simeq \THH(\THH(\Ff_p/\Ss_{\Zz_p})/\Ss_{\Zz_p})\simeq \THH(\THH(\Ff_p)/\Ss_{\Zz_p})\simeq \THH(\THH(\Ff_p))\simeq L_{T^2}\Ff_p.
\end{equation*}
The first equivalence follows from Definition \ref{def:relativetorihomology}. The second equivalence follows from the following equivalences 
\begin{equation*}
   \Ff_p\otimes_{\Ss_{\Zz_p}}\Ff_p\simeq \Ff_p\otimes_{\Ss_{\Zz_p}}(\Ss_{\Zz_p} \otimes\Ff_p)\simeq \Ff_p\otimes \Ff_p 
\end{equation*}

The third equivalence follows from the following equivalences 
\begin{equation*}
\THH(\Ff_p)\otimes_{\Ss_{\Zz_p}}\THH(\Ff_p)\simeq \THH(\Ff_p)\otimes_{\Ss_{\Zz_p}}(\Ss_{\Zz_p} \otimes(\Ff_p\otimes_{\Ff_p}\THH(\Ff_p)))\simeq \THH(\Ff_p)\otimes \THH(\Ff_p).    
\end{equation*}

Therefore, we finally have a commutative ring map
\begin{equation*}
    j_p\to (L_{T^2} \Ff_p)^{h T^2}.
\end{equation*}

Hence, we have a ring map
\begin{equation*}
(L_{T^n} j_p)^{h T^n} \to (L_{T^{n+2}}\Ff_p)^{h T^{n+2}}.
\end{equation*}
Applying $k(n+1)_*$ to the above map,  since the right hand side is nonzero, so the left hand side is nonzero.
\end{proof}
\begin{remark}
    We can produce a ring map
    \begin{equation*}
        j_p\to ((L_{T^2} \Ff_p)^{h T^2})^{\wedge}_p.
    \end{equation*}
However, it seems that we can not show that $(L_{T^2}\Ff_p)^{h T^2}$ is $p$-complete (essentially because $\TC^-(\Ff_p)$ is not bounded below).
\end{remark}
\begin{remark}\label{trick}
For a group $G$, let $\Sp^{BG}=\Fun(BG,\Sp)$. Then for a commutative algebra object $A\in \CAlg(\Sp^{BH})$. We have an adjunction pair 
\begin{equation*}
    U: \Sp^{BH} \rightleftarrows \Sp: (-)^{h H}.
\end{equation*}
Where, the forgetful functor $U$ is symmetric monoidal.
We have the Loday construction $G\otimes - :\CAlg(\Sp^{BH}) \to \CAlg(\Sp^{B (H\times G)})$. Let $X\in \CAlg(\Sp^{B H})$.
Therefore we have a natural transformation
\begin{equation*}
    (G\otimes X^{h H}) \to (G\otimes X)^{h H}.
\end{equation*}
Applying $-^{h G}$, we get a natural transformation
\begin{equation*}
    (G\otimes X^{h H})^{h G} \to (G\otimes X)^{h H\times G}.
\end{equation*}
\end{remark} 

The above statements also suggests the following proposition.

\begin{proposition}
Let $R$ be a commutative ring spectrum. If there is a commutative ring map
\begin{equation*}
    R\to \THH(\Zz_p)^{h S^1}.
\end{equation*}
Let $p\geq 5$ and $1\leq n+2\leq p$. Then the unit map
\begin{equation*}
k(n+1)_* \to k(n+1)_* (L_{T^n} R)^{h T^n}
\end{equation*}
sends $v_n$ to a nonzero element.
\end{proposition}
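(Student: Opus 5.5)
The plan is to follow the template of the proof of Theorem \ref{thm:higherredshift}. The idea is to factor the given map through $(L_{T^{n+2}}\Ff_p)^{hT^{n+2}}$, and then invoke Veen's theorem with $n+2$ in place of $n$. The hypotheses $p\geq 5$ and $1\leq n+2\leq p$ are exactly those under which Veen's theorem applies to $L_{T^{n+2}}\Ff_p$. That theorem says the unit $k(n+1)_*\to k(n+1)_*(L_{T^{n+2}}\Ff_p)^{hT^{n+2}}$ sends $v_{n+1}$ to a nonzero element. (As in the statement of the theorem, the relevant class is the top $v$-element; I read ``$v_n$'' in the statement accordingly.)

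\textbf{Step 1.} Produce a map of commutative ring spectra $R\to (L_{T^2}\Ff_p)^{hT^2}$. Start from the given map $R\to \THH(\Zz_p)^{hS^1}$ and compose with the map
\begin{equation*}
\THH(\Zz_p)^{hS^1}\to \THH(\THH(\Ff_p)^{hS^1})^{hS^1}\to \THH(\THH(\Ff_p))^{hT^2}\simeq (L_{T^2}\Ff_p)^{hT^2}.
\end{equation*}
The first arrow applies $\THH(-)^{hS^1}$ to a commutative ring map $\Zz_p\to \THH(\Ff_p)^{hS^1}$. The second arrow is Remark \ref{trick} with $G=H=S^1$ and $X=\THH(\Ff_p)$. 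Unlike in Theorem \ref{thm:higherredshift}, we need not pass through $p$-completion or $\Ss_{\Zz_p}$-relative $\THH$, because we start directly with $\THH(\Zz_p)^{hS^1}$ rather than with $K(\Zz_p)_p$.

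To get $\Zz_p\to\THH(\Ff_p)^{hS^1}$, I would use $H\Zz_p\simeq\tau_{\geq 0}\TC(\Ff_p)\to\THH(\Ff_p)^{hS^1}$ from the remark following the height-$0$ theorem. This is also where I expect the only real subtlety. One must check that the Loday/$\THH$ functoriality and Remark \ref{trick} give genuinely $\Ee_\infty$ maps compatible with the torus actions. One must also make sure the equivalence $L_{T^2}\Ff_p\simeq\THH(\THH(\Ff_p))$ respects the $T^2$-action, which should follow from the explicit construction (\ref{equ:highertori}).

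\textbf{Step 2.} Apply $(L_{T^n}(-))^{hT^n}$ to the map from Step 1. By functoriality this gives a ring map $(L_{T^n}R)^{hT^n}\to (L_{T^n}(L_{T^2}\Ff_p)^{hT^2})^{hT^n}$. Remark \ref{trick} again, now with $G=T^n$, $H=T^2$ and $X=L_{T^2}\Ff_p$, gives
\begin{equation*}
(L_{T^n}(L_{T^2}\Ff_p)^{hT^2})^{hT^n}\to (L_{T^n}L_{T^2}\Ff_p)^{hT^{n+2}}\simeq (L_{T^{n+2}}\Ff_p)^{hT^{n+2}}.
\end{equation*}
The last equivalence uses $(T^n\times T^2)\otimes A\simeq T^n\otimes(T^2\otimes A)$, as in the proof that $L_{T^n}A\simeq\THH^{(n)}(A)$. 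All maps here are unital ring maps.

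\textbf{Step 3.} Smash with $k(n+1)$ to get a unital ring map $k(n+1)_*(L_{T^n}R)^{hT^n}\to k(n+1)_*(L_{T^{n+2}}\Ff_p)^{hT^{n+2}}$. It commutes with the unit maps out of $k(n+1)_*$. By Veen, $v_{n+1}$ has nonzero image in the target. Hence its image in $k(n+1)_*(L_{T^n}R)^{hT^n}$ is nonzero, which gives the claim.
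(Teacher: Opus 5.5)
Your proposal is correct and follows the same route as the paper. You produce an $\Ee_\infty$ map $R\to (L_{T^2}\Ff_p)^{hT^2}$, pass to $(L_{T^n}(-))^{hT^n}$ using Remark \ref{trick} to reach $(L_{T^{n+2}}\Ff_p)^{hT^{n+2}}$, and invoke Veen's theorem for $T^{n+2}$. Your added details are sound refinements of the paper's terse argument: you correctly skip the $\Ss_{\Zz_p}$-relative step because no $p$-completion occurs here, and your unitality argument yields detection of $v_{n+1}$ (the intended reading of the typo ``$v_n$'') rather than mere nonvanishing.
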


\begin{proof}
    From the condition on $R$, we can obtain a commutative ring map
    \begin{equation*}
        R\to (L_{T^2} \Ff_p)^{h T^2}.
    \end{equation*}
This gives a commutative ring map
\begin{equation*}
   (L_{T^n} R)^{h T^n} \to (L_{T^{n+2}}\Ff_p)^{h T^{n+2}}.
\end{equation*}
Applying $k(n+1)_*$, since the right hand side is non-zero, hence the left hand side is non-zero.
\end{proof}
\section{Further direction}\label{section4}
It seems that the method presented in this paper cannot applied to $\BP\langle 1\rangle$ or $ku_p$.
In other words, we can not produce a ring map
\begin{equation*}
    ku_p\to \THH(\Zz_p)^{h S^1}.
\end{equation*}
However, we have $\Ee_{\infty}$-ring maps \cite[Construction 3.7]{kulocal-k}
\begin{equation*}
    L_{K(1)}ku_p \to 
    L_{K(1)} \K(\Zz_p[\zeta_{p^{\infty}}])
    \to  L_{K(1)}\TC (\Zz_p[\zeta_{p^{\infty}}])
\end{equation*}
So, the following questions are interesting.
\begin{question}
\begin{enumerate}
    \item Is \begin{equation*}
    k(n)_*(L_{T^n} \Zz_p[\zeta_{p^\infty}])^{h T^n}
\end{equation*}
non-zero.
\item Is \begin{equation*}
    K(n)_*(L_{T^n} \Zz_p[\zeta_{p^\infty}])^{h T^n}
\end{equation*}
non-zero.
\end{enumerate}
\end{question}
According to the chromatic Nullstellensatz, we only need to investigate the cases for Lubin--Tate spectra.
So, one can ask the following questions.
Let $e_n:=\tau_{\geq 0} E_n$. 
\begin{question}
\begin{enumerate}
    \item Is there a height $n-1$ commutative ring spectrum $R_{n-1}$, such that there is a commutative ring map
\begin{equation*}
    e_n \to \THH(R_{n-1})^{h S^1}.
\end{equation*}
\item Is there a height $n-1$ commutative ring spectrum $R_{n-1}$, such that there is a commutative ring map
\begin{equation*}
    e_n \to \TC(R_{n-1})^{\wedge}_p.
\end{equation*}
\end{enumerate}
\end{question}

\bibliographystyle{alpha}
\bibliography{ref}
\end{document}